\newcommand*\samethanks[1][\value{footnote}]{\footnotemark[#1]}
\title{Reduced order modeling for elliptic problems with high contrast diffusion coefficients}
\author{Albert Cohen\thanks{Laboratoire Jacques-Louis Lions, Sorbonne Universit\'e, 
4 place Jussieu, 75005 Paris, France (albert.cohen@sorbonne-universite.fr, matthieu.dolbeault@sorbonne-universite.fr and agustin.somacal@sorbonne-universite.fr)}, Wolfgang Dahmen \thanks{Department of Mathematics, University of South Carolina,
1523 Greene St, Columbia, SC 29208, USA (wolfgang.anton.dahmen@googlemail.com)},\\
Matthieu Dolbeault\samethanks[1], Agustin Somacal\samethanks[1]}
\date{\today}
\newcommand\eps{\varepsilon}
\newcommand\e{\varepsilon}
\renewcommand\t{\tilde}
\newcommand\wt{\widetilde}
\newcommand\<{\langle}
\renewcommand\>{\rangle}
\renewcommand\leq{\leqslant}
\renewcommand\geq{\geqslant}
\def\cM{{\cal M}}
\def\cN{{\cal N}}
\def\cB{{\cal B}}
\def\cL{{\cal L}}
\def\cK{{\cal K}}
\def\o{\overline}
\def\({\left (}
\def\){\right )}
\newcommand{\R}{\mathbb R}
\newcommand{\N}{\mathbb N}
\def\Chi{\raise .3ex\hbox{\large $\chi$}}
\newcommand{\be}{\begin{equation}}
\newcommand{\ee}{\end{equation}}
\newcommand{\iref}[1]{{\rm (\ref{#1})}}
\newtheorem{theorem}{Theorem}[section]
\newtheorem{proposition}[theorem]{Proposition}
\newtheorem{lemma}[theorem]{Lemma}
\newtheorem{remark}[theorem]{Remark}
\newtheorem{definition}[theorem]{Definition}
\DeclareMathOperator{\divergence}{div}
\DeclareMathOperator{\Span}{span}
\newcommand{\ac}[1]{\textcolor{black}{#1}}
\begin{document}

\maketitle

\begin{abstract}
\ac{
We consider a parametric elliptic PDE with a scalar piecewise constant diffusion coefficient taking arbitrary positive values 
on fixed subdomains. This problem is not uniformly elliptic, as
the contrast can be arbitrarily high, contrarily to the Uniform Ellipticity Assumption (UEA) that is commonly made
on parametric elliptic PDEs. We construct reduced model spaces
that approximate uniformly well all solutions with estimates in relative error
that are independent of the contrast level. These estimates are sub-exponential
in the reduced model dimension, yet exhibiting the curse of dimensionality as the number
of subdomains grows. Similar estimates are obtained for the Galerkin projection,
as well as for the state estimation and parameter estimation inverse problems. 
A key ingredient in our construction and analysis is the study
of the convergence towards limit solutions of stiff problems when diffusion tends to 
infinity in certain domains.}
\end{abstract}

\section{Introduction}

\subsection{Reduced models for parametrized PDEs}

Parametric PDE's are commonly used to describe complex physical phenomena. 
\ac{With $y=(y_1, \dots, y_d)$ denoting a parameter vector ranging in some domain $Y\subset \R^d$,
and $u(y)$ the corresponding solution to the PDE of interest, assumed
to be well defined in some Hilbert space $V$, we denote by
\be
\cM:=\{u(y)\;: \; y\in Y\},
\label{manif}
\ee
the collection of all solutions, called the {\it solution manifold}.}

There are two main ranges of problems
associated to parametric PDEs:
\begin{enumerate}
\item
Forward modeling: in applications where many queries
of the parameter to solution map $y \mapsto u(y)$ are required, one needs
numerical forward solvers that efficiently compute approximations $\t u(y)$ with
a prescribed accuracy.
\item
Inverse problems: when the exact value of the parameter $y$ is unknown,
one is interested in either recovering an approximation
to $u(y)$ (state estimation) or to $y$ (parameter estimation), 
from a limited number of observations $z_i=\ell_i(u(y))$,
possibly corrupted by noise.
\end{enumerate}

{\it Reduced order modeling} is widely used for tackling both problems.
In its most common form, its aim is to construct linear spaces $V_n$
of moderate dimension $n$ that approximate all solutions $u(y)$ with
best possible certified accuracy.  The natural benchmark for measuring the performance of such linear reduced models is provided by
the {\it Kolmogorov $n$-width} of the solution manifold
\be
d_n(\cM)_V:=\inf_{\dim(V_n)=n} {\rm dist}(\cM,V_n)_V
\label{nwidth}
\ee
that describes the performance of an optimal space. Here
\[
{\rm dist}(\cM,V_n)_V:=\sup_{u\in \cM}\inf_{v\in V_n} \|u-v\|_V=\sup_{u\in \cM}\|u-P_{V_n}u\|_V,
\]
where $P_{V_n}$ is the $V$-orthogonal projector onto $V_n$. We refer the reader to 
\cite{P} for a general treatment of $n$-widths.

While an optimal space achieving the above infimum
is usually out of reach, there exist two main approaches
aiming to construct ``sub-optimal yet good'' spaces. The first one consists
in building expansions of the parameter to solution map, \ac{for example by polynomials
\be
u_n(y):=\sum_{\nu\in \Lambda_n} u_\nu y^\nu, \quad y^\nu:=y_1^{\nu_1}\dots y_d^{\nu_d},
\label{pol}
\ee
where $\Lambda_n\subset \N^d$ is a set of cardinality $n$. The coefficients $u_\nu$ are
elements of $V$ and therefore, for all $y\in Y$ the approximation $u_n(y)$ is picked from the space
\[
V_n:={\rm span}\{u_\nu\; : \; \nu\in \Lambda_n\}.
\]
Notice that $u_n(y)$ is not the orthogonal projection $P_{V_n}u(y)$ in this case, but $u_n(y)$ is  easy to compute for a given query $y$ once 
the $u_\nu$ have been constructed 
(usually through a high fidelity finite element solver). We refer to \cite{BNT,BNTT1,BNTT2,BCM,CD,CDS,TWZ} for instances of this approach.}

The second approach is the reduced basis method \ac{\cite{Ha,RHP,S}, that consists in taking}
\[
V_n:={\rm span}\{u^1, \dots, u^n\},
\]
where the $u^j=u(y^j)$ are particular solution instances
corresponding to a selection of parameter vectors $y^j\in Y$. \ac{A close variant is the proper orthogonal decomposition
method \cite{Ch,Vo,WP}, where the reduced spaces are obtained by principal component analysis applied to large training 
set of such instances. In the reduced basis method,} the parameter vectors $y^1, \dots, y^n$ can be selected by a greedy algorithm, \ac{introduced in \cite{VPRP}} and
originally studied in \cite{BMPPT}. For such a selection process, it is proved in \cite{BCDDPW, DPW} that if
$d_n(\cM)_V$ has a certain algebraic or exponential rate of decay with $n$,
then a similar rate is achieved by ${\rm dist}(\cM,V_n)_V$ for the reduced basis
spaces. 

It follows that the  reduced basis spaces constructed by the greedy algorithm  are close to optimal.  This is in contrast to
the spaces $V_n$ spanned by the polynomial coefficients $u_\nu$ for which
the approximation rate is not guaranteed to be optimal. \ac{We refer to \cite{BC} for instances where reduced basis methods
can be proved to converge with a strictly higher rate than polynomial approximations.}
On the other hand, the polynomial constructions \eqref{pol} have certain numerical advantages.   Namely, \ac{for several relevant classes of
parametrized PDEs,}
it can be shown that  the parameter to solution mapping $y\mapsto u(y)$  has certain \ac{smoothness properties}  that can be used to obtain a-priori bounds on the $\|u_\nu\|_V$ without actually computing these norms.
This allows  an  a priori
selection of  an appropriate set $\Lambda_n$ and the proof of concrete approximation 
estimates for the error $\sup_{y\in Y} \|u(y)-u_n(y)\|_V$. These estimates in turn provide an upper bound for $d_n(\cM)_V$, and therefore for
reduced basis approximations.

\subsection{Parametrized elliptic PDEs}

One prototypal instance where the convergence analysis described above has been 
deeply studied is the \ac{parametrized second order elliptic equation
\be
-\divergence (a(y)\nabla u(y)) = f \quad {\rm in} \; \Omega, \quad u_{|\partial \Omega}=0
\quad {\rm on} \;\partial \Omega,
\label{ellip}
\ee
where $\Omega\subset \R^m$ is the spatial domain, $f\in H^{-1}(\Omega)$ is a source term, and $a(y)$ has the {\it affine} form}
\be
a(y)=\o a +\sum_{j=1}^dy_j\psi_j,
\label{affine}
\ee
with $\o a$ and $(\psi_1,\dots,\psi_d)$ some fixed functions in $L^\infty(\Omega)$. 

\ac{The corresponding solution $u(y)\in H^1_0(\Omega)$ is defined through the standard variational formulation
in $H^1_0(\Omega)$ equipped with its usual norm. }Up to renormalization, it is usually assumed that the $y_j$ range in $[-1,1]$, 
or equivalently $Y=[-1,1]^d$. To ensure existence and uniqueness of solutions, one typically assumes that the so-called {\it Uniform Ellipticity Assumption}
(UEA) holds: for some fixed $0<r\leq R<\infty$,
\be
r\leq a(x, y) \leq R, \quad \quad x\in \Omega, \quad y\in Y,
\label{uea}
\ee
where $a(x,y):=a(y)(x)=\o a(x) +\sum_{j=1}^dy_j\psi_j(x)$,
or in short $r\leq a(y) \leq R$ for all $y\in Y$. Under this assumption, Lax-Milgram theory ensures that the solution map
$y\mapsto u(y)$ is well defined from $Y$ into $H^1_0(\Omega)$,
with the uniform bound
\[
\|u(y)\|_{H^1_0}:=\|\nabla u(y)\|_{L^2} \leq \frac {C_f} {r}, \quad y\in Y.
\]
Here and throughout this paper
\be
C_f:=\|f\|_{H^{-1}}.
\label{Cf}
\ee
It was proved in \cite{BNTT1,TWZ}
that, under UEA, polynomial approximations \iref{pol} of given total degree 
converge \ac{sub-exponentially: for $\Lambda_n=\{|\nu|\leq k\}$ with $n={k+d\choose d}$, one has
\be
\sup_{y\in Y} \|u(y)-u_n(y)\|_{H^1_0} \leq C'\exp(-cn^{1/d}),
\label{expn}
\ee
Such sub-exponential} rates show that 
the spaces $V_n$ based on polynomial expansions or reduced bases
perform significantly better than standard finite element spaces, at least
for a moderate number $d$ of parameters. \ac{It is possible
to maintain a rate of convergence as $d$ grows, and even when $d=\infty$,
when assuming some anisotropy in the variable $y_j$ through the decay
of the size of $\psi_j$ as $j\to \infty$, see in particular \cite{BCM,CD,CDS} for results of this type.}

\subsection{High constrast problems}

The Uniform Ellipticity Assumption \iref{uea} implies that there
is a uniform control on the level of contrast in the diffusion function
\be
\kappa(y):= 
\frac{\max_{x\in \Omega} a(x,y)}{\min_{x\in \Omega} a(x,y)} \leq \frac R r, \quad y\in Y.
\label{contrast}
\ee
This assumption also plays a key role in the derivation of the above approximation results, since it 
guarantees that the parameter
to solution map has a holomorphic extension to a sufficiently
large complex neighbourhood of $Y$. In this case, a good polynomial approximation
$u_n$ may be defined by simply truncating \ac{the power series $\sum_{\nu\in\N^d} u_\nu y^\nu$, leading to the estimate \iref{expn}.
}

On the other hand, there exist various situations where 
one would like to avoid such a strong restriction on the level of contrast.
Perhaps the most representative setting is when the domain $\Omega$
is partitioned into disjoint subdomains 
$\{\Omega_1,\dots,\Omega_d\}$, each of them admitting
a constant diffusivity level that could vary strongly between subdomains.
This is typically the case when modeling diffusion in materials having
multiple layers or inclusions that could have very different nature, \ac{for example
air or liquid versus solid. This situation can be encountered in groundwater flow applications, 
where certain subdomains correspond to cavities, for which the diffusion function
becomes nearly infinite, as opposed to subdomains containing sediments or other porous rocks.}

In such a case, we do not want to limit the contrast level. To represent this setting, we let 
\be
a(y)_{|\Omega_j}=y_j, \quad y_j\in ]0,\infty[
\label{pw}
\ee
or equivalently $a(y)=\sum_{j=1}^d y_j\Chi_{\Omega_j}$,
which corresponds to the affine form \iref{affine} with $\o a=0$ and $\psi_j=\Chi_{\Omega_j}$, now with
\be
Y:=]0,\infty[^d.
\label{parameterdomain}
\ee
We take \eqref{parameterdomain} as the definition of the parameter domain $Y$ for the remainder of this paper. 
The solution $u(y)$ satisfies the variational
formulation
\be
\sum_{j=1}^d \,y_j\int_{\Omega_j} \nabla u(y)\cdot\nabla v \, dx=\<f,v\>_{H^{-1},H^1_0}, \quad v\in H^1_0(\Omega),
\label{variat}
\ee
or equivalently $-y_j\Delta u(y)=f$ as elements of $H^{-1}(\Omega_j)$ on each $\Omega_j$, with the standard jump conditions $[a(y)\partial_{\vec{n}} u(y)]=0$ across the boundaries between subdomains.

Let us observe that in this setting, it is hopeless to find spaces $V_n$ that
approximate all solutions $u(y)$ uniformly well. Indeed, the following 
homogeneity property obviously holds: for any $y\in Y$ and $t>0$, one has
\be
u(t y)=t^{-1}u(y).
\label{homog}
\ee
This property implies in particular that $\|u(y)\|_{H^1_0}$ tends to infinity 
as $y\to 0$, and so does $\|u(y)-P_{V_n}u(y)\|_{H^1_0}$ in general. In fact, this also shows
that the solution manifold $\cM$ is {\it not} relatively compact and does
not have finite $n$-widths.

In addition to this principal difficulty, let us remind that when using the spaces
$V_n$ in forward modeling, we typically use the Galerkin method, \ac{that delivers
the orthogonal projection onto $V_n$ however for the energy norm
\be
\|v\|_{y}^2:=\sum_{j=1}^d \,y_j \int_{\Omega_j} |\nabla v|^2 \, dx.
\label{ynorm}
\ee
This approximation is thus optimal in $H^1_0(\Omega)$,
however up to the constant $\kappa(y)^{1/2}$, which deteriorates with high contrast.}
\newline

{\it The main contribution of this paper is to treat these issues, and derive approximation estimates that are robust to high contrast, in the sense that
they are independent of $y\in Y$.} 
\newline

Due to the
main objection coming from the homogeneity property \iref{homog}, it is 
natural to look for uniform approximation estimates in relative error, that is, estimates of the form
\be
\label{rel}
\|u(y)-P_{V_n}u(y)\|_{H^1_0}\leq \eps_n \|u(y)\|_{H^1_0}, \quad y\in Y,
\ee
with $\lim_{n\to\infty}\eps_n=0$, and similarly for $P_{V_n}^yu(y)$. 
Our main results, Theorems \ref{width} and \ref{widthy}, exhibit spaces $V_n$ ensuring the validity of such uniform 
estimates with $\eps_n$ having sub-exponential decay with $n$,
similar to the known results under UEA.

\begin{remark}
\ac{High contrast problems have been the object of intense investigation, in particular 
with the objective of developing techniques for multilevel or domain decomposition preconditioning \cite{AY,AGKS,GE} 
and a-posteriori error estimation \cite{Ain,BV}, that are provably robust with respect to the level of contrast.
To our knowledge, the present work is the first in which this robustness is established for
reduced modeling methods.}
\end{remark}

\subsection{Outline}

Throughout this paper, we consider the parametrized elliptic PDE \iref{ellip} with 
$a(y)$ having piecewise constant form \iref{pw} over a fixed partition.
In view of  the homogeneity property \iref{homog}, we are led to consider
the subset  
\be
\label{pd}
Y':= [1,\infty[^d
\ee
of parameters corresponding to the coercive regime. Any result on relative approximation error that is established for $Y'$ extends automatically to all of $Y$ because of the homogeneity property.
Accordingly, we let 
\be
\cB:=\{u(y)\; : \; y\in Y' \}.
\ee

In \S 2, we start by proving that $\cB$ is a precompact set of $H^1_0(\Omega)$.
\ac{One crucial ingredient for this analysis \ac{are the} {\it limit
solutions} of the so-called {\it stiff problem}, obtained as $y_j\to \infty$ for certain $j\in \{1,\dots,d\}$. }

In \S 3, we construct specific reduced model spaces for which
the approximation estimate \iref{rel} holds with $\eps_n$ decaying \ac{sub-exponentially}.
\ac{Our construction is based on partitioning the parametric 
domain $Y'$ into rectangular regions and using a
different polynomial approximations on each region. This results
in global reduced model space $V_n$ for which the accuracy bound
remains \ac{sub-exponential}, however in $\exp(-cn^{\frac 1{2d-2}})$. 
A key ingredient for establishing these \ac{sub-exponential} rates is the derivation of 
quantitative estimates on the convergence of $u(y)$ towards 
limit solutions defined in \S 2 as some $y_j$ tend to infinity. These estimates
are established under an additional geometrical assumption on the partition,
similar results for a general partition of $\Omega$ being an open problem.}

In \S 4, we discuss the use of these reduced model spaces in forward modeling and inverse problems. Our main result relative to forward modeling
is that the estimate \iref{rel}
also holds for the Galerkin projection with 
the same exponential decay $\e_n$. We show that such a result
is only possible if $V_n$ includes functions that have constant values over some subdomains. For the state estimation problem, 
we follow the Parametrized Background Data Weak (PBDW)
method \cite{BCDDPW17,MPPY}, 
and obtain recovery bounds that are uniform over $y\in Y$ in relative error.
For the parameter estimation problem, we introduce an ad-hoc strategy that
specifically exploits the piecewise constant structure of the diffusion coefficient and obtain similar recovery bounds for the inverse diffusivity. 

We conclude in \S 5 by presenting some numerical illustrations revealing the effectiveness of the reduced model spaces
even in the high-contrast regime, as expressed by the approximation results.
\newline
\newline
\ac{{\bf Acknowledgements:} We thank the anonymous reviewers for their constructive comments.
We also thank Fran\c cois Murat for useful discussions in the understanding of the convergence process towards limit solutions,
Hamza Maimoune for leading us to this work through
his remarks during his master project, and Jules Pertinand for useful discussions.
}

\section{Uniform approximation in relative error}

In this section we work under no particular geometric  
assumption on the partition $\{\Omega_1,\dots,\Omega_d\}$
of $\Omega$, and consider the solution manifold \ac{$\cM$ defined by \iref{manif}},
where $u(y)\in H^1_0(\Omega)$ is solution to the elliptic boundary value problem with variational formulation \iref{variat}. Our objective is to
show the existence of spaces $V_n$ that uniformly approximate
$\cM$ in the relative error sense expressed by \iref{rel}. 

\subsection{\ac{Limit solutions} and the extended solution manifold}

Our first observation is that this collection can be 
continuously extended when $y_j=\infty$ for some values
of $j$, \ac {through limit solutions of stiff inclusions problems.
Such limit solutions have for example been
considered in the context homogeneization, see e.g. p.98 of \cite{JKO}.}

For this purpose, to any $S\subset \{1,\dots,d\}$, we associate the
space 
\be
V_S:=\{v\in H^1_0(\Omega) \; : \; \nabla v_{| \Omega_j}=0, \;\; j\in S\}.
\ee
In other words, $V_S$ consists of the functions from $H^1_0(\Omega)$
that have constant values on the subdomains $\Omega_j$ for $j\in S$ (or on each of their connected components if these subdomains are not connected).
It is a closed subspace of $H^1_0(\Omega)$.
We decompose the parameter vector $y$ according to
\be
y=(y_S,y_{S^c}), \quad y_S:=(y_j)_{j\in S}\quad\text{and} \quad y_{S^c}:=(y_j)_{j\in S^c}.
\ee
For any finite and positive vector $y_{S^c}$,
similar to the $\|\cdot\|_y$ norm \iref{ynorm}, we may define
\be
\|v\|_{y_{S^c}}^2:=\sum_{j\in S^c} \,y_j \int_{\Omega_j} |\nabla v|^2 \, dx,
\ee
which is a semi-norm on $H^1_0(\Omega)$, and a full norm equivalent to 
the $H^1_0$-norm on $V_S$. Also note that when $y=(y_S,y_{S^c})$ is finite,
one then has $\|v\|_{y_{S^c}}=\|v\|_{y}$ for any $v\in V_S$.

For any finite and positive vector $y_{S^c}$, we 
define the function $u_S(y_{S^c}) \in V_S$ solution to the 
following \ac {stiff inclusions} problem:
\be
\sum_{j\in S^c} \,y_j \int_{\Omega_j} \nabla u_S(y_{S^c})\cdot \nabla v \, dx=\< f, v \>_{H^{-1},H^1_0}, \quad v\in V_S.
\label{limitproblem}
\ee
The following result shows that this solution is well defined and 
is the limit of $u(y)$, when $y_{S^c}$ is fixed and $y_j\to \infty$ for $j\in S$. \ac{Note that the
weak convergence is established in \cite{JKO} (p. 98) and so we concentrate the proof on the strong convergence.}

\begin{lemma}
\label{limlemma}
There exists a unique $u_S(y_{S^c})\in V_S$ solution to \iref{limitproblem},
which is the limit in $H^1_0(\Omega)$ of the solution 
$u(y_{S}, y_{S^c})$ as $y_j\to \infty$ for all $j\in S$.
\end{lemma}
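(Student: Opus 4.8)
\emph{The plan is to establish the two assertions in turn, the strong convergence being the substantial part.}

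For existence and uniqueness, I would apply the Lax--Milgram theorem on the closed subspace $V_S$. The bilinear form $(w,v)\mapsto\sum_{j\in S^c}y_j\int_{\Omega_j}\nabla w\cdot\nabla v$ is symmetric and continuous on $H^1_0(\Omega)$, and by the observation made just above that $\|\cdot\|_{y_{S^c}}$ is a full norm on $V_S$ equivalent to the $H^1_0$-norm, it is coercive on $V_S$. Since $v\mapsto\<f,v\>$ is a bounded linear functional, this produces a unique $u_S(y_{S^c})\in V_S$ solving \iref{limitproblem}.

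For the convergence I would fix $y_{S^c}$ and let $y_j\to\infty$ for $j\in S$, abbreviating $u_y:=u(y_S,y_{S^c})$ and $u_\infty:=u_S(y_{S^c})$. First I would record the a priori bound: testing \iref{variat} with $v=u_y$ gives the energy identity $\|u_y\|_y^2=\<f,u_y\>\leq C_f\|u_y\|_{H^1_0}$, and since $\|u_y\|_y^2\geq(\min_j y_j)\|u_y\|_{H^1_0}^2$ with $\min_j y_j$ bounded below along the limit process (as $y_{S^c}$ is fixed positive and $y_S\to\infty$), the family $(u_y)$ is bounded in $H^1_0(\Omega)$. Any weak limit $u^*$ of a subsequence lies in $V_S$, because $\sum_{j\in S}y_j\int_{\Omega_j}|\nabla u_y|^2\leq\<f,u_y\>$ forces $\int_{\Omega_j}|\nabla u_y|^2\to0$ for $j\in S$; and passing to the limit in \iref{variat} tested against any fixed $v\in V_S$, where the stiff terms vanish since $\nabla v_{|\Omega_j}=0$ for $j\in S$, shows that $u^*$ solves \iref{limitproblem}. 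By uniqueness $u^*=u_\infty$, so the whole family converges weakly; this is the weak convergence already recorded in \cite{JKO}.

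The heart of the argument is to upgrade this to strong convergence in $H^1_0(\Omega)$. The key point is that $u_\infty\in V_S\subset H^1_0(\Omega)$ is itself an admissible test function in \iref{variat}. As $\nabla u_\infty$ vanishes on every $\Omega_j$ with $j\in S$, all stiff terms disappear and one obtains the exact identity
\[
\sum_{j\in S^c}y_j\int_{\Omega_j}\nabla u_y\cdot\nabla u_\infty=\<f,u_\infty\>=\|u_\infty\|_{y_{S^c}}^2.
\]
Expanding the $y_{S^c}$-seminorm of the difference then yields $\|u_y-u_\infty\|_{y_{S^c}}^2=\|u_y\|_{y_{S^c}}^2-\|u_\infty\|_{y_{S^c}}^2$. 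Since $\|u_y\|_{y_{S^c}}^2\leq\|u_y\|_y^2=\<f,u_y\>\to\<f,u_\infty\>=\|u_\infty\|_{y_{S^c}}^2$ by the weak convergence, the right-hand side has nonpositive limit superior, so $\|u_y-u_\infty\|_{y_{S^c}}\to0$, i.e. the gradients converge strongly on the nonstiff subdomains. On the stiff ones, $\sum_{j\in S}y_j\int_{\Omega_j}|\nabla u_y|^2=\<f,u_y\>-\|u_y\|_{y_{S^c}}^2\to0$ while $\nabla u_\infty=0$, so the gradients converge there as well; adding the two contributions gives $\|u_y-u_\infty\|_{H^1_0}\to0$.

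The main obstacle I anticipate is precisely this last step: the seminorm $\|\cdot\|_{y_{S^c}}$ controls gradients only off $S$, so strong $H^1_0$-convergence cannot follow from seminorm convergence alone, and the stiff subdomains must be handled separately. The device that makes everything work is the energy identity $\|u_y\|_y^2=\<f,u_y\>$ combined with testing against the limit $u_\infty$, which simultaneously produces seminorm convergence on $S^c$ and decay of the Dirichlet energy on $S$. A subsidiary point to verify is that the uniform a priori bound, and hence weak compactness, requires $\min_j y_j$ to remain bounded away from $0$, which holds automatically here since $y_{S^c}$ is fixed and $y_S\to\infty$.
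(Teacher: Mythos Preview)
Your proof is correct and follows essentially the same approach as the paper: Lax--Milgram on $V_S$ for existence/uniqueness, uniform $H^1_0$ bounds plus uniqueness for weak convergence, and an energy expansion combined with weak convergence for strong convergence. The only cosmetic difference is that the paper bounds $c\|u_y-u_\infty\|_{H^1_0}^2\leq\int_\Omega a(y)|\nabla(u_y-u_\infty)|^2$ and expands this single quantity (which simultaneously handles the stiff and nonstiff subdomains), whereas you split the argument into the $S^c$ and $S$ contributions; the ingredients are identical.
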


\begin{proof} \ac{Using the bilinear form $(u,v)\mapsto \sum_{j\in S^c}\,  y_j \int_{\Omega_j}\nabla u\cdot\nabla v\, dx$
in the space $V_S$, Lax-Milgram theory implies the existence of a unique solution $u_S(y_{S^c})\in V_S$ to \iref{limitproblem}.}

\ac{ Consider now a sequence $(y^n)_{n\geq 1}\in Y^{\mathbb N}$, with $y^n_{S^c}=y_{S^c}$ and $y^n_j\to\infty$ for all $j\in S$. Denoting $u_n=u(y^n)$,
 it is readily seen that $(u_n)_{n\geq 1}$ is uniformly bounded in $H^1_0$ norm by $C=C_f\,c^{-1}$, where $c:=\min_{n\geq 1}\min_{1\leq j \leq d} y^n_j>0$,
 and that any weak limit of a sequence extraction is solution to the variational equation \iref{limitproblem}. Therefore the whole sequence $(u_n)_{n\geq 1}$ weakly converges to $\bar u=u_S(y_{S^c})$. 
 }
 
We finally prove strong convergence by writing
\begin{align*}
c \|u_n-\bar u\|_{H^1_0}^2
&\;\;\leq \,\int_\Omega a(y^n)|\nabla(u_n-\bar u)|^2\, dx\\[3.5pt]
&\;\;= \,\<f,u_n\>_{H^{-1},H^1_0}-2\<\bar u, u_n\>_{y_{S^c}}+\|\bar u\|_{y_{S^c}}^2\\[7pt]
&  \underset{n\to\infty}{\longrightarrow}  \<f,\bar u\>_{H^{-1},H^1_0} -\|\bar u\|_{y_{S^c}}^2=0.
\end{align*}
\end{proof}

The above lemma allows us to readily extend 
the solution manifold by introducing
\[
\wt Y:= ]0,\infty]^d,
\]
and
\[
\o \cM:=\{u(y)\; : \; y\in\wt Y\},
\]
where we have formally set
\[
u(y):=u_S(y_{S^c}),
\]
when $y_j=\infty$ for $j\in S$ and $y_j<\infty$ for $j\in S^c$.
Note that when $S=\{1,\dots,d\}$ the space $V_S$ is trivial and 
one has
\[
u(\infty,\dots,\infty)=0.
\]
\begin{remark}
Although we do not make explicit use of it, it can be
checked that despite the fact that $y_j=0$ is excluded in the
definition of $\o \cM$, it indeed coincides with the closure of $\cM$
in $H^1_0(\Omega)$ due to the fact that $\|u(y)\|_{H^1_0}\to \infty$
as $y\to 0$.
\end{remark}

\begin{remark}
\ac{More precisely, when some $y_j$ tend to zero, $u(y)$ converges to the solution of the so-called soft inclusions problem (see \cite{JKO}, chapter 3), outside the corresponding subdomains $\Omega_j$. Here, due to the fact that the approximation estimates that we prove further are in relative error, 
these other limit solutions are of no use in our analysis.}
\end{remark}

\subsection{A compactness result}

As already observed in the introduction, the manifold $\o\cM$ is not bounded in $H^1_0(\Omega)$ due to the homogeneity property \iref{homog} and therefore not compact.

In order to treat this defect, we consider
\[
\wt Y':=[1,\infty]^d,
\]
and the submanifold
\[
\o\cB:=\{u(y)\; : \; y\in \wt Y'\},
\]
which is now bounded in $H^1_0(\Omega)$, 
from the standard a-priori estimate
\[
\|u(y)\|_{H^1_0} \leq \frac{C_f}{\min y_j} \leq C_f,
\]
that is obtained by taking $v=u(y)$ in the variational formulation 
\iref{variat}, with $C_f=\|f\|_{H^{-1}}$ as in \iref{Cf}. This estimate trivially extends to $u_S(y_{S^c})$
when the $y_j$ have infinite value for $j\in S$. 
In addition we have the following result.

\begin{theorem}
\label{comptheo}
The set $\o\cB$ is compact in $H^1_0(\Omega)$.
\end{theorem}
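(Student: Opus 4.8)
The plan is to realize $\o\cB$ as the continuous image of a compact parameter set, and to conclude from the fact that in a metric space a set is compact if and only if it is sequentially compact. First I would observe that the extended parameter domain $\wt Y'=[1,\infty]^d$ is a compact metric space: each factor $[1,\infty]$ is the one-point compactification of $[1,\infty[$ and is homeomorphic to a compact interval (for instance via $t\mapsto t^{-1}\in\,]0,1]$ together with $\infty\mapsto 0$), so the finite product is compact and sequentially compact. It therefore suffices to show that the map $y\mapsto u(y)$ is sequentially continuous from $\wt Y'$ into $H^1_0(\Omega)$, where $u(y)=u_S(y_{S^c})$ at a boundary point with $y_j=\infty$ exactly for $j\in S$.

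Concretely, given an arbitrary sequence in $\o\cB$, I would first extract a subsequence along which $y^n\to y^*$ in $\wt Y'$; writing $S:=\{j\,:\,y_j^*=\infty\}$, this means $y_j^n\to y_j^*<\infty$ for $j\in S^c$ and $y_j^n\to\infty$ for $j\in S$. The goal is then to prove that $u(y^n)\to u(y^*)=u_S(y^*_{S^c})$ strongly in $H^1_0(\Omega)$. This is a strengthening of Lemma \ref{limlemma}, the only difference being that here the \emph{finite} coordinates $y^n_{S^c}$ are also allowed to move towards their limits $y^*_{S^c}$, whereas the lemma holds them fixed; I would reproduce the same three-step scheme. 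Step one is the uniform a priori bound $\|u(y^n)\|_{H^1_0}\leq C_f/\min_j y_j^n\leq C_f$, valid since $y_j^n\geq 1$, so that a further subsequence converges weakly to some $\bar u\in H^1_0(\Omega)$. Step two identifies $\bar u$: testing the energy identity on $\Omega_j$ for $j\in S$ gives $\int_{\Omega_j}|\nabla u(y^n)|^2\leq C_f^2/y_j^n\to 0$, whence $\nabla\bar u$ vanishes on each such $\Omega_j$ and $\bar u\in V_S$; and passing to the limit in the variational formulation \iref{variat} tested against any $v\in V_S$ (the $S$-terms dropping out since $\nabla v=0$ there, and each finite coefficient $y_j^n\to y_j^*$ combining with the weak convergence of $\nabla u(y^n)$) shows that $\bar u$ solves \iref{limitproblem} with data $y^*_{S^c}$. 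By the uniqueness part of Lemma \ref{limlemma}, $\bar u=u_S(y^*_{S^c})$, so every weakly convergent subsequence has this common limit and the whole sequence converges weakly to it.

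The final step is to upgrade weak to strong convergence through the energy argument of Lemma \ref{limlemma}, the delicate point being that the energy norm $\|\cdot\|_{y^n}$ itself now varies with $n$. Using $\min_j y_j^n\geq 1$ and expanding,
\[
\|u(y^n)-\bar u\|_{H^1_0}^2\leq\|u(y^n)-\bar u\|_{y^n}^2=\|u(y^n)\|_{y^n}^2-2\sum_{j\in S^c}y_j^n\int_{\Omega_j}\nabla u(y^n)\cdot\nabla\bar u\,dx+\|\bar u\|_{y^n}^2,
\]
where the $S$-indexed terms disappear because $\nabla\bar u=0$ on those subdomains. The first term equals $\langle f,u(y^n)\rangle\to\langle f,\bar u\rangle$; the cross term tends to $\sum_{j\in S^c}y_j^*\int_{\Omega_j}|\nabla\bar u|^2\,dx=\|\bar u\|_{y^*_{S^c}}^2$ by the joint convergence $y_j^n\to y_j^*$ and $\nabla u(y^n)\rightharpoonup\nabla\bar u$; and the last term likewise tends to $\|\bar u\|_{y^*_{S^c}}^2$ since $\nabla\bar u$ is supported in $S^c$. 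Because $\bar u$ solves \iref{limitproblem}, testing it against $v=\bar u$ gives $\|\bar u\|_{y^*_{S^c}}^2=\langle f,\bar u\rangle$, so the three limits cancel and the right-hand side tends to $0$. Hence $u(y^n)\to u(y^*)\in\o\cB$ strongly, which establishes sequential compactness and therefore the theorem.

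I expect the main obstacle to be precisely this simultaneous passage to the limit in the energy identity: one must track that the moving coefficients $y_j^n$ pair correctly against the weakly converging gradients, and that the contributions of the stiff subdomains $j\in S$ are controlled — they vanish in the limit — so that the changing weights do not obstruct the cancellation. Once the generalization of Lemma \ref{limlemma} allowing variable finite coordinates is in hand, the reduction to sequential compactness of $\wt Y'$ is routine.
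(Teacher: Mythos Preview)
Your argument is correct and in fact cleaner than the paper's. Both proofs begin the same way—extract a subsequence of parameters converging in $\wt Y'=[1,\infty]^d$ with infinite limit set $S$—but then diverge. The paper does not prove continuity of the extended solution map directly; instead it runs an $\eps/3$ argument, introducing two auxiliary parameter vectors $\bar y^n$ (replacing infinite components of $y^n$ by large finite ones) and $\t y^n$ (replacing the finite components by their limits), and bounding $\|u(y^n)-u(\bar y^n)\|$ and $\|u(\t y^n)-u_S(y_{S^c})\|$ via two separate applications of Lemma~\ref{limlemma}, together with a Strang-type perturbation estimate for $\|u(\bar y^n)-u(\t y^n)\|$. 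Your route avoids both the auxiliary vectors and the Strang lemma by reproving the weak-then-strong convergence scheme of Lemma~\ref{limlemma} in one pass while letting the $S^c$-coordinates move as well; the compactness then drops out as ``continuous image of a compact metric space''. The only point to tidy up is that some components of $y^n$ may already equal $\infty$, so \iref{variat} and the $\|\cdot\|_{y^n}$ norm are not literally available; but since $\nabla u(y^n)$ then vanishes on the corresponding $\Omega_j$ and the limit variational formulation \iref{limitproblem} applies, every sum over $j$ in your energy identity is effectively restricted to the finite coordinates and the argument goes through unchanged.
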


\begin{proof}
Consider any sequence of vectors $y^n=(y_1^n,\dots,y_d^n)\in \wt Y'$
for $n\geq 1$. We need to prove that the corresponding sequence of solutions
$
(u(y^n))_{n\geq 1}
$
admits a converging subsequence. For this purpose, we observe
that there exists a subset $S\in \{1,\dots,d\}$ such that, up to subsequence
extraction,
\[
\lim_{n\to \infty} y_j^n =\infty, \quad j\in S,
\]
and
\[
\lim_{n\to \infty} y_j^n = y_j<\infty, \quad j\in S^c.
\]
Note that $S$ could be empty, for instance in the case where the $y_j^n$
are uniformly bounded for all $j$.

Let $\e>0$. \ac{Using the strong convergence result in Lemma \ref{limlemma}, for all $n\geq 1$ there exists
an auxiliary vector $\bar y^n$ such that
$\bar y_j^n=y_j^n$ when $y_j^n<\infty$,
$\bar y_j^n<\infty$ when $y_j^n=\infty$, such that by having picked $\bar y_j^n$ large enough in the second case}
\[
\|u(y^n)-u(\bar y^n)\|_{H^1_0}\leq \e/3.
\]
In addition we may assume that $\bar y_j^n\to \infty$ for
$j\in S$. Next we introduce the vector 
$\t y^n$ such that $\t y_j^n=\bar y_j^n$ when $j\in S$ and
$\t y_j^n=y_j$ when $j\in S^c$. Applying again Lemma \ref{limlemma}, we find that with $y_{S^c}=(y_j)_{j\in S^c}$,
one has
\[
\|u(\t y^n)-u_S(y_{S^c})\|_{H^1_0} \leq \e/3,
\]
for $n$ sufficiently large. Finally we argue that 
\[
\|u(\t y^n)-u(\bar y^n)\|_{H^1_0} \leq \e/3,
\]
for $n$ large enough. This is a consequence of \ac{the following variant of
Strang first lemma (which proof is similar and left as an exercise to the reader)} that says 
that for two diffusion functions $\bar a$ and $\t a$, the corresponding solution $\bar u$ and $\t u$ with the
same data $f$ satisfy
\[
\|\bar u-\t u\|_{H^1_0}\leq\frac{C_f\,\|\bar a-\t a\|_{L^\infty}}{ \min\{\bar a_{\min}, \t a_{\min}\}^{2}}.
\]
\ac{We then apply this to $\o a:=\o a_n=a(\o y^n)$ and $\t a:=\t a_n=a(\t y^n)$, 
observing that from their definition, $\|\bar a-\t a\|_{L^\infty}=\max_{j\in S^c} |\bar y^n_j-y_j|  \to 0$ as $n\to \infty$.}
Therefore $\|u(y^n)-u_S(y_{S^c})\|_{H^1_0}\leq \e$ for $n$ sufficiently large, which concludes the proof.
\end{proof}

We next observe that any $y\in Y$ can be rewritten as 
\[
y=t \t y,
\]
with $\t y\in Y'$ and normalization $\min \t y_j=1$, for some $t>0$,
and from \iref{homog} one has $u(y)=t^{-1}u(\t y)$. This motivates 
the study of the further reduced manifold
\be
\cN:=\{u(y)\; : \; y\in \wt Y',\; \min y_j=1\},
\label{normmanifold}
\ee
which is a subset of $\o \cB$. 

One important observation is that the solutions contained in $\cN$
are also uniformly bounded from below, under mild assumptions
on the data $f$.

\begin{lemma}
The set $\cN$ is compact in $H^1_0(\Omega)$. Moreover, one has the framing
\be
\min_{1\leq j \leq d} \|f\|_{H^{-1}(\Omega_j)} \leq \|u(y)\|_{H^1_0}\leq C_f, 
\label{framing}
\ee
for all $u(y)\in \cN$.
\end{lemma}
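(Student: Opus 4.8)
The plan is to split the statement into the compactness claim and the two-sided bound, handling compactness by showing that $\cN$ is a \emph{closed} subset of the compact set $\o\cB$ furnished by Theorem \ref{comptheo}. Given a sequence $u(y^n)\in\cN$, I would first invoke the subsequence extraction already carried out in the proof of Theorem \ref{comptheo}: up to extraction there is a set $S\subset\{1,\dots,d\}$ with $y_j^n\to\infty$ for $j\in S$ and $y_j^n\to y_j<\infty$ for $j\in S^c$, and the corresponding solutions converge in $H^1_0(\Omega)$ to $u(y)$ with $y=(y_S,y_{S^c})$, $y_S=\infty$. The only additional point is that the limit vector still satisfies $\min_j y_j=1$, so that $u(y)\in\cN$. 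Since each $y^n$ has at least one coordinate equal to $1$ and the index set is finite, a pigeonhole argument yields a further subsequence along which a fixed index $j_0$ satisfies $y_{j_0}^n=1$; then $j_0\in S^c$ and $y_{j_0}=1$, while all remaining coordinates satisfy $y_j\geq 1$ as limits of values in $[1,\infty]$. Hence $\min_j y_j=1$, so $\cN$ is closed and therefore compact.

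The upper bound $\|u(y)\|_{H^1_0}\leq C_f$ is immediate from the a priori estimate already recorded for $\o\cB$, using $\min_j y_j=1$. For the lower bound, the key idea is to localize the variational formulation on the subdomain where the minimal value is attained. Writing $j_0$ for an index with $y_{j_0}=1$ (necessarily $j_0\in S^c$), I would test \iref{limitproblem} (or \iref{variat} in the fully finite case) against an arbitrary $v\in H^1_0(\Omega_{j_0})$ extended by zero to $\Omega$. Such a $v$ vanishes on every $\Omega_k$ with $k\neq j_0$, hence lies in $V_S$, and the bilinear form collapses to the single term $y_{j_0}\int_{\Omega_{j_0}}\nabla u(y)\cdot\nabla v\,dx=\int_{\Omega_{j_0}}\nabla u(y)\cdot\nabla v\,dx$. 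Cauchy--Schwarz then gives $\<f,v\>\leq\|\nabla u(y)\|_{L^2(\Omega_{j_0})}\|\nabla v\|_{L^2(\Omega_{j_0})}$, so taking the supremum over $v$ yields $\|f\|_{H^{-1}(\Omega_{j_0})}\leq\|\nabla u(y)\|_{L^2(\Omega_{j_0})}\leq\|u(y)\|_{H^1_0}$, and a fortiori the claimed lower bound after minimizing over $j$.

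The main obstacle is conceptual rather than technical: one must recognize that the constraint $\min_j y_j=1$ is preserved under the limit extraction (hence the pigeonhole step), and that the very same index must be exploited for the lower bound, where the value $y_{j_0}=1$ is precisely what removes the otherwise harmful factor $y_j$ from the localized estimate. Everything else reduces to the closed-subset-of-compact principle and a one-line test-function computation.
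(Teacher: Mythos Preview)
Your proposal is correct and follows essentially the same approach as the paper: compactness via the closed-subset-of-compact principle, the upper bound from the standard a priori estimate, and the lower bound by testing against $v\in H^1_0(\Omega_{j_0})$ with $y_{j_0}=1$. The paper merely asserts that $\cN$ is closed in $\o\cB$ without justification, so your pigeonhole argument to preserve $\min_j y_j=1$ under the limit is a welcome elaboration rather than a deviation.
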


\begin{proof}
The compactness of $\cN$ follows from that of $\o\cB$, since $\cN$ is a closed subset of $\o\cB$. For the framing, as $a(y)\geq 1$ on $\Omega$,
\[
\|u\|_{H^1_0}^2
\leq \sum_{j\in S^c}\,y_j\int_{\Omega_j} |\nabla u(y)|^2\, dx=\<f,u(y)\>_{H^{-1},H^1_0}\leq C_f\|u(y)\|_{H^1_0},
\]
so $\|u(y)\|_{H^1_0}\leq C_f$. Now take $j\in\{1,\dots,d\}$ such that $y_j=1$, and consider $\phi\in H^1_0(\Omega_j)$. Then
\[
\<f,\phi\>_{H^{-1},H^1_0}=\int_{\Omega_j} \nabla u(y)\cdot\nabla \phi\, dx\leq \|u(y)\|_{H^1_0(\Omega)}\|\phi\|_{H^1_0(\Omega_j)},
\]
\ac{which gives the result.}
\end{proof}

In the sequel of this paper, we always work under the condition that
the lower bound in \iref{framing} is strictly positive
\be
c_f:=\min_{1\leq j \leq d} \|f\|_{H^{-1}(\Omega_j)}>0.
\label{cf}
\ee
Let us observe that when $f$ is a function in $L^2(\Omega)$,
this is ensured as soon as $f$ is not identically zero
on one of the $\Omega_j$. We thus have 
\be
0<c_f\leq \|u(y)\|_{H^1_0} \leq C_f,
\label{frameh10}
\ee
for all $u(y)\in \cN$.

\begin{remark}
The condition $c_f>0$ is in general 
necessary for controlling $\|u(y)\|_{H^1_0}$ from below. Indeed assume $\|f\|_{H^{-1}(\Omega_j)}=0$
for some $j$ such that $\o \Omega\setminus\o\Omega_j$ is connected. Then taking $y_k=\infty$ for $k\neq j$ and $y_j=1$, we find that $u(y)\in V_S$ 
with $S=\{j\}^c$, which is equivalent to $u(y)\in H^1_0(\Omega_j)$ 
since it vanishes on the other sub-domains. As $\|f\|_{H^{-1}(\Omega_j)}=0$, we obtain $u(y)=0$.
\end{remark}

\begin{remark}
\label{remframey}
One also has the uniform framing in the $\|\cdot\|_y$ norm since
\be
0<c_f\leq \|u(y)\|_{H^1_0}\leq \|u(y)\|_{y} =\sqrt{\<f,u\>_{H^{-1},H^1_0}} \leq C_f,
\label{framey}
\ee
for all $u(y)\in \cN$ when all $y_j$ are finite. 
\end{remark}

The framing \iref{frameh10} has an implication on the existence of reduced model
spaces that approximate uniformly well all solutions $u(y)\in \o \cM$ in relative error.

\begin{theorem}
\label{theorelprojerror}
There exists a sequence of linear spaces $(V_n)_{n\geq 1}$ such
that $\dim(V_n)=n$, and a sequence $(\eps_n)_{n\geq 1}$ that
converges to zero such that
\be
\|u(y)-P_{V_n}u(y)\|_{H^1_0} \leq \eps_n \|u(y)\|_{H^1_0}
\label{releps}
\ee
for all $y\in \wt Y$, where $P_{V_n}$ is the $H^1_0(\Omega)$-orthogonal 
projector onto $V_n$.
\end{theorem}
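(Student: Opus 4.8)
The plan is to reduce the uniform relative bound over all of $\wt Y$ to a uniform \emph{absolute} bound over the compact normalized manifold $\cN$, and then to read off the conclusion from compactness together with the strictly positive lower bound $c_f$.

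First I would exploit homogeneity to restrict attention to $\cN$. Given any $y\in\wt Y$ that is not identically $\infty$ in all coordinates, set $t:=\min_{j\,:\,y_j<\infty} y_j>0$ and $\t y:=t^{-1}y$ (understood coordinatewise, with $t^{-1}\cdot\infty=\infty$). Then $\min_{1\leq j\leq d}\t y_j=1$, so $u(\t y)\in\cN$, and by the homogeneity property \iref{homog} one has $u(y)=t^{-1}u(\t y)$. This extends to the limit solutions $u_S$, since rescaling all finite weights $y_j$, $j\in S^c$, by $t$ rescales the solution of \iref{limitproblem} by $t^{-1}$. Because the orthogonal projector $P_{V_n}$ is linear, both sides of \iref{releps} carry the same factor $t^{-1}$, so the relative error at $y$ equals the relative error at $\t y\in\cN$. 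For the single remaining point $y=(\infty,\dots,\infty)$ one has $u(y)=0$ and \iref{releps} holds trivially.

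Next I would use that $\cN$ is compact in $H^1_0(\Omega)$. A compact subset of a Hilbert space has Kolmogorov $n$-widths tending to zero, so there exist spaces $V_n$ with $\dim V_n=n$ and
\[
\delta_n:=\sup_{w\in\cN}\|w-P_{V_n}w\|_{H^1_0}\longrightarrow 0.
\]
Combining this with the lower framing bound $\|w\|_{H^1_0}\geq c_f>0$ valid for every $w\in\cN$ (from \iref{frameh10}), I obtain for each $\t y$ with $u(\t y)\in\cN$ the estimate
\[
\frac{\|u(\t y)-P_{V_n}u(\t y)\|_{H^1_0}}{\|u(\t y)\|_{H^1_0}}\leq\frac{\delta_n}{c_f}=:\eps_n,
\]
with $\eps_n\to 0$. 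By the scale invariance from the first step, this relative bound transfers verbatim to every $y\in\wt Y$, which is exactly \iref{releps}.

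There is no serious obstacle left once Theorem \ref{comptheo} and the framing \iref{frameh10} are in hand: the two genuinely nontrivial facts, namely the compactness of the normalized manifold and the strictly positive lower bound $c_f$, have already been established, and the present statement is essentially their formal consequence. The only points needing a little care are the extension of homogeneity to the limit solutions and the degenerate point $(\infty,\dots,\infty)$, both routine. It is worth stressing that this argument is purely qualitative, yielding $\eps_n\to 0$ with no rate; making $\eps_n$ sub-exponential is precisely the quantitative task deferred to \S 3.
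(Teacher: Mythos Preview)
Your proof is correct and follows essentially the same approach as the paper: reduce to the normalized manifold $\cN$ via the homogeneity \iref{homog}, invoke compactness of $\cN$ to get spaces $V_n$ with vanishing absolute error, and then convert to a relative bound using the lower framing $c_f$ from \iref{frameh10}. The paper's argument differs only in notational details (it writes $t^{-1}=\min_j y_j$ and scales $y\mapsto ty$ rather than $y\mapsto t^{-1}y$), and your explicit remarks on extending homogeneity to the limit solutions and handling the degenerate point $(\infty,\dots,\infty)$ are appropriate.
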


\begin{proof} Since $\cN$ is compact, there exists
a sequence of spaces $(V_n)_{n\geq 1}$ with
$\dim(V_n)=n$ and a sequence $(\sigma_n)_{n\geq 1}$
that tends to $0$, such that
\[
\|v-P_{V_n}v\|_{H^1_0}\leq \sigma_{n}, \quad v\in \cN.
\]
Now let $y\in\wt Y$ differing from
$(\infty,\dots,\infty)$, for which there is nothing to prove
since $u(\infty,\dots,\infty)=0$, and let $t^{-1}=\min_{1\leq j \leq d} y_j<\infty$. By homogeneity, $t^{-1}u(y)=u(ty)\in \cN$, 
and therefore
\[
\|u(y)-P_{V_n}u(y)\|_{H^1_0}=t\|u(ty)-P_{V_n}u(ty)\|_{H^1_0(\Omega)}\leq t \sigma_n.
\]
On the other hand, $\|u(y)\|_{H^1_0(\Omega)}=t\|u(ty)\|_{H^1_0(\Omega)}\geq tc_f$ by framing \iref{framing}, which proves Theorem \ref{theorelprojerror} with $\eps_n=\sigma_{n}/c_f$.
\end{proof}

The above theorem tells us that we can achieve 
contrast-independent approximation in relative
error. It is however still unsatisfactory from two perspectives:
\begin{enumerate}
\item
It does not describe the rate of decay of $\eps_n$ as 
the reduced dimension $n$ grows. In practice, one would
like to construct reduced spaces $V_n$ such that this decay
is fast, similar to the exponential decay obtained under UEA.
\item
The approximation property is expressed in terms of the
orthogonal projection $P_{V_n}$. In applications to 
forward modeling, we approximate the
solution $u(y)$ in the space $V_n$ by the Galerkin 
projection $P_{V_n}^y u(y)$. We thus wish for uniform estimates also for such approximations.
\end{enumerate}

These two problems are treated in \S 3
and \S 4 respectively.

\section{Approximation rates}

Our construction of efficient reduced model spaces is based
on a certain partitioning of the parameter domain $\wt Y'$
associated to the manifold $\o\cB$. To any $\ell=(\ell_1,\dots,\ell_d)\in\N_0^d$
we associate the dyadic rectangle 
\be
R_{\ell}=[2^{\ell_1},2^{\ell_1+1}]\times \dots \times [2^{\ell_d},2^{\ell_d+1}],
\ee
For a positive integer $L$ to be fixed further, we modify the definition
of $R_{\ell}$ by replacing the interval $[2^{\ell_j},2^{\ell_j+1}]$ by $[2^{\ell_j},\infty]$
when $\ell_j=L$ for some $j$. This leads to the partition
\be
\wt Y'=\bigcup_{\ell\in \{0,\dots,L\}^d} R_{\ell}.
\ee

\begin{figure}
\begin{center}
\begin{tikzpicture}
  \draw[->](0,0)--(5,0);
  \draw[->](0,0)--(0,5);
  \draw(0,4)--(4,4);
  \draw(4,0)--(4,4);
  \draw(0,2)--(4,2);
  \draw(2,0)--(2,4);
  \draw(0,1)--(4,1);
  \draw(1,0)--(1,4);
  \draw(0,0.5)--(4,0.5);
  \draw(0.5,0)--(0.5,4);
  \node at (4.7,-0.3) {$z_1$};
  \node at (-0.3,4.7) {$z_2$};
  \node at (-0.3,-0.3) {$0$};
  \node at (-0.3,4) {$1$};
  \node at (4,-0.3) {$1$};
  \node at (-0.3,2) {$\frac{1}{2}$};
  \node at (2,-0.3) {$\frac{1}{2}$};
  \node at (-0.3,0.5) {$\frac{1}{2^L}$};
  \node at (0.5,-0.3) {$\frac{1}{2^L}$};
  \node at (3,3) {$R_{11}^{-1}$};
  \node at (3,1.5) {$R_{12}^{-1}$};
  \node at (1.5,3) {$R_{21}^{-1}$};
  \node at (1.5,1.5) {$R_{22}^{-1}$};
  \node at (3,0.85) {$\vdots$};
  \node at (3,0.25) {$R_{1L}^{-1}$};
  \node at (0.75,3) {$...$};
  \node at (0.75,0.85) {$\iddots$};
  \node at (-0.3,1.25) {$\vdots$};
  \node at (1.25,-0.3) {$\dots$};
\end{tikzpicture}
\end{center}
\caption{Partition of $[0,1]^d$ by the inverse rectangles $R_{\ell}^{-1}$ in the case $d=2$.}
\label{figure1}
\end{figure}
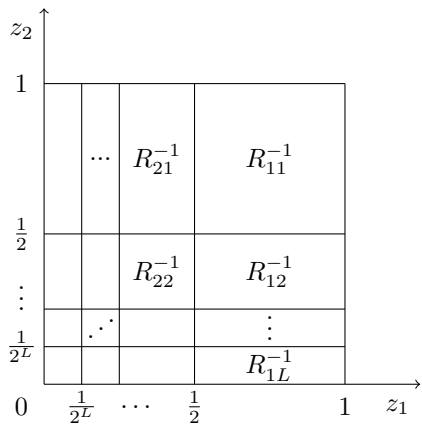

This partition is best visualized in the inverse parameter domain by setting
\be
z=(z_1,\dots,z_d):=(y_1^{-1},\dots,y_d^{-1})\in [0,1]^d.
\ee
Then, the inverse rectangles $R_{\ell}^{-1}$ split the unit cube, as shown on
Figure \ref{figure1}. In particular, the rectangles touching the axes correspond
to rectangles $R_{\ell}$ of infinite size. 

We build reduced model spaces through a piecewise
polynomial approximation over this partition. In other words, for each 
$\ell\in\{0,\dots,L\}^d$, we use different polynomials 
\[
u_{\ell,k}(y)=\sum_{|\nu|\leq k}u_{\ell,\nu}y^\nu,
\]
of total degree $k$ for approximating $u(y)$ when $y\in R_{\ell}$, leading to a family of local reduced model spaces
\be
\label{reducedlocal}
V_{\ell,k}={\rm span}\{u_{\ell,\nu}\; : \; |\nu|\leq k\},
\ee
that can be either used individually when approximating $u(y)$ if
the rectangle $R_\ell$ containing $y$ is known, or summed up
in order to obtain a global reduced model space.

In this section we show that this construction yields exponential 
convergence rates in \iref{rel}, similar to those obtained under 
a Uniform Ellipticity Assumption. This requires a proper tuning
between the total polynomial degree $k$ and the integer $L$ that 
determines the size of the partition. In the study of local polynomial
approximation, we treat separately the inner rectangles for which
$\ell\in \{0,\cdots,L-1\}^d$ and the infinite rectangles for which
one or several $\ell_j$ are equal to $L$. The estimates obtained
in the latter case rely on the additional assumption that the partition has
a geometry of disjoint inclusions.

\subsection{Polynomial approximation on inner rectangles}

Inner rectangles $R_\ell$ are particular cases of rectangles of the form
\be
\label{genrec}
R=[a_1,2a_1]\times \cdots \times [a_d,2a_d],
\ee
for some $a_j\geq 1$. The following lemma, adapted from \cite{BC}, shows that one can approximate the parameter to solution map in the $\|\cdot\|_y$ and $\|\cdot\|_{H^1_0}$ norms on such rectangles,
with a rate that decreases exponentially in the total polynomial degree.

\begin{lemma}
\label{lemmapolR}
Let $R$ be any rectangle of the form \iref{genrec}. Then, for each
$k\geq 0$, there exists functions $u_\nu\in H^1_0(\Omega)$ such that
\be
\Big\|u(y)-\sum_{|\nu|\leq k} u_\nu y^\nu\Big\|_{y}\leq C3^{-k}, \quad y\in R,
\label{yest}
\ee
where $C:=\frac 1 {\sqrt 3} C_f$, and
\be
\Big\|u(y)-\sum_{|\nu|\leq k} u_\nu y^\nu\Big\|_{H^1_0}\leq C3^{-k}, \quad y\in R,
\label{h10est}
\ee
where $C:=\frac 1 {\sqrt 6} C_f$.
\end{lemma}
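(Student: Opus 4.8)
The plan is to exploit the analyticity of the parameter-to-solution map $y\mapsto u(y)$ on the rectangle $R=[a_1,2a_1]\times\cdots\times[a_d,2a_d]$ and to bound the Taylor truncation error by a power-series argument. First I would recenter the problem at the midpoint $\bar a=(3a_1/2,\dots,3a_d/2)$ of $R$ and write $u(y)=\sum_{\nu}u_\nu(y-\bar a)^\nu$; rearranging into monomials $y^\nu$ is harmless since any polynomial of degree $\le k$ in $y-\bar a$ is a polynomial of degree $\le k$ in $y$, so the spaces $V_{\ell,k}$ are unchanged. The key quantitative input is a bound on the derivatives $\partial_y^\nu u(y)$, or equivalently on the coefficients $u_\nu$, uniform over $R$. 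I expect to obtain these by differentiating the variational equation \eqref{variat} with respect to the parameters: since $a(y)=\sum_j y_j\Chi_{\Omega_j}$ is affine in $y$, each differentiation in $y_j$ produces only first-order terms, giving a clean recursion for $\partial_y^\nu u$.

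Concretely, differentiating $\sum_j y_j\int_{\Omega_j}\nabla u(y)\cdot\nabla v=\langle f,v\rangle$ once in $y_j$ yields $\sum_k y_k\int_{\Omega_k}\nabla(\partial_{y_j}u)\cdot\nabla v=-\int_{\Omega_j}\nabla u\cdot\nabla v$, and higher derivatives satisfy the analogous identity with right-hand side involving lower-order derivatives. Testing against $v=\partial_y^\nu u$ and using the coercivity $\|v\|_y^2\ge\|v\|_{H^1_0}^2\cdot(\text{stuff})$ together with the lower bound $a(y)\ge a_j\ge1$ on each $\Omega_j$, I would extract a bound of the form $\|\partial_y^\nu u(y)\|_y\lesssim \nu!\,C_f\prod_j(c\,a_j)^{-\nu_j}$ for a suitable constant, the factor $a_j^{-\nu_j}$ being exactly what makes the $y^\nu$ scaling work on $R$. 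Summing the Taylor remainder $\sum_{|\nu|>k}u_\nu(y-\bar a)^\nu$ over $R$, where $|y_j-\bar a_j|\le a_j/2$, the geometric factors collapse: each coordinate contributes a ratio like $\tfrac{1}{2}\cdot\tfrac{1}{\text{(gap)}}$, and the tuning of constants should yield precisely the contraction factor $3^{-k}$, with the $\|\cdot\|_y$ constant $C_f/\sqrt3$ and, after paying the contrast factor $\kappa(y)^{1/2}$ to pass to $\|\cdot\|_{H^1_0}$, the constant $C_f/\sqrt6$ in \eqref{h10est}.

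The main obstacle I anticipate is getting the constants sharp enough to land exactly on base $3$ rather than on some unspecified $\rho<1$. This is a question of where one places the center and radius of analyticity: the natural complex domain on which $y\mapsto u(y)$ extends holomorphically is determined by requiring the perturbed bilinear form to stay coercive, i.e. $\mathrm{Re}\,a(y)>0$ in each $\Omega_j$, which permits complex excursions of radius comparable to $a_j$ around $\bar a_j$. Optimizing the polydisc radius against the sidelength $a_j/2$ of $R$, together with the Cauchy estimates for the holomorphic extension, is what pins down the constant $3$ and the prefactors. Since the lemma is stated as adapted from \cite{BC}, I would follow that reference's holomorphy-plus-Cauchy-estimate route rather than the bare recursion above, as the holomorphic framework makes the radius optimization transparent and avoids tracking factorial constants by hand; the delicate point is verifying that the coercivity-based radius is uniform over all admissible $a_j\ge1$ so that the bound holds simultaneously for every inner rectangle.
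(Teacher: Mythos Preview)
Your overall plan---recenter at the midpoint $\bar y=\tfrac32(a_1,\dots,a_d)$, expand $u(y)$ as a power series in $\tilde y=y-\bar y$, and bound the tail---is exactly what the paper does. The execution differs, however, and the paper's route is both shorter and yields the sharp base~$3$ without any optimisation of polydisc radii or factorial bookkeeping. Writing $A_y=A_{\bar y}+\sum_j\tilde y_j A_j$ and $g=A_{\bar y}^{-1}f$, the equation becomes $(I+B(\tilde y))u(y)=g$ with $B(\tilde y)=\sum_j\tilde y_jA_{\bar y}^{-1}A_j$. One then checks that $B(\tilde y)$ is self-adjoint for the $\langle\cdot,\cdot\rangle_{\bar y}$ inner product and, since $|\tilde y_j|\le\tfrac13\bar y_j$, has operator norm $\le\tfrac13$ in that norm. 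The truncated Neumann series $N_ku(y)=\sum_{l\le k}(-1)^lB(\tilde y)^l g$ is precisely the degree-$k$ Taylor polynomial, and the tail bound $\|u(y)-N_ku(y)\|_{\bar y}\le\tfrac12\,3^{-k}\|g\|_{\bar y}\le\tfrac12 C_f\,3^{-k}$ is immediate. Your recursion on $\partial_y^\nu u$ and the Cauchy-estimate alternative would both work in principle, but they make the constant $3$ emerge only after some tuning, whereas here it is simply the ratio $\bar y_j/|\tilde y_j|_{\max}$.

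One point in your write-up is genuinely off: you say the passage from the $\|\cdot\|_y$ estimate to the $\|\cdot\|_{H^1_0}$ estimate costs a factor $\kappa(y)^{1/2}$. That would be fatal, since $\kappa(y)$ is not bounded uniformly over all rectangles $R$ of the stated form---and avoiding any dependence on $\kappa(y)$ is the whole purpose of the lemma. In fact no such factor is needed: since every $y_j\ge 1$ one has $\|v\|_{H^1_0}\le\|v\|_y$, so the $H^1_0$ bound follows from the $y$-norm bound with the \emph{same} constant. The paper's slightly better constant $C_f/\sqrt6$ (versus $C_f/\sqrt3$) comes from first establishing the error bound in the fixed $\|\cdot\|_{\bar y}$ norm and then using $\|v\|_{H^1_0}^2\le\tfrac23\|v\|_{\bar y}^2$ (from $\bar y_j\ge\tfrac32$) and $\|v\|_y^2\le\tfrac43\|v\|_{\bar y}^2$ (from $y_j\le\tfrac43\bar y_j$) separately.
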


\begin{proof}  \ac{The exponential rate is established in \cite{BC} for a single parameter domain
with uniform ellipticity assumption. Here the difficulty lies in the fact that we want the
same estimate for all parametric rectangles $R$ and thus without control on the uniform ellipticity.
Still the technique of proof, based on power series, is similar.}

The elliptic equation $-{\rm div}(a(y)u(y))=f$ may be written in operator form 
\[
A_y u(y)=f,
\]
where the invertible operator $A_y: H^1_0(\Omega)\to H^{-1}(\Omega)$ is defined
by 
\[
\<A_yv,w\>_{H^{-1},H^1_0}:=\int a(y)\nabla v\cdot\nabla w\, dx=\<v,w\>_y.
\]
We introduce 
\[
\o y:=\frac 3 2(a_1,\dots,a_d),
\]
the center of the rectangle, and write any $y\in R$ as
\[
y=\o y+\t y, 
\]
where the components $\t y_j$ of $\t y$ vary in $[-a_j/2,a_j/2]$.
We may write $A_y=A_{\o y}+\sum_{j=1}^d \t y_j A_j$, where the operators 
$A_j: H^1_0(\Omega)\to H^{-1}(\Omega)$
are defined by
\[
\<A_jv,w\>_{H^{-1},H^1_0}:=\int_{\Omega_j}\nabla v\cdot\nabla w\, dx.
\]
This allows us to rewrite the equation as
\[
(I+B(\t y)) u(y)=g, 
\]
where $g:=A_{\o y}^{-1}f\in H^1_0(\Omega)$ and 
$B(\t y)=\sum_{j=1}^d \t y_j A_{\o y}^{-1}A_j$ acts in $H^1_0(\Omega)$.
We then observe that 
\[
\<B(\t y)v,w\>_{\o y}=\<A_{\o y}B(\t y)v,w\>_{H^{-1},H^1_0}
=\sum_{j=1}^d \t y_j \<A_jv,w\>_{H^{-1},H^1_0}
=\sum_{j=1}^d \t y_j \int_{\Omega_j} \nabla v\cdot \nabla w\, dx,
\]
and therefore, since $|\t y_j|\leq \frac 1 3\o y_j$,
\[
|\<B(\t y)v,w\>_{\o y}| \leq \frac 1 3\sum_{j=1}^d \o y_j \Big |\int_{\Omega_j}\nabla v\cdot \nabla w\, dx\Big | \leq  \frac 1 3\|v\|_{\o y} \|w\|_{\o y},
\]
which shows that $\|B(\t y)\|_{\o y\to \o y} \leq \frac 1 3$. We may thus approximate
$(I+B(\t y))^{-1}$ by the partial Neumann series
\[
\sum_{l=0}^k (-1)^l B(\t y)^l, 
\]
which is a polynomial in $\t y$ of total degree $k$. The corresponding 
polynomial approximation to 
$u(y)$ is given by
\[
N_ku(y)=\sum_{l=0}^k (-1)^l B(\t y)^lg=
\sum_{l=0}^k (-1)^l \(\sum_{j=1}^d \t y_j A_{\o y}^{-1}A_j\)^l g
=\sum_{|\nu|\leq k} v_\nu \t y^\nu,
\]
and coincides with the truncated power series of $\t u(\t y):=u(\o y+\t y)$ at $\t y=0$,
that is, 
\[
v_\nu:=\frac 1 {\nu !} \partial^\nu u(\o y), \quad\quad \nu !:=\prod \nu_j!.
\]
It can 
be rewritten in the form 
\[
N_ku(y)=\sum_{|\nu|\leq k} u_\nu y^\nu.
\]
One has
\[
\|u(y)-N_ku(y)\|_{\o y} \leq \sum_{l> k} \|B(\t y)^l g\|_{\o y}
\leq \Big(\sum_{l> k} 3^{-l}\Big)\|A_{\o y}^{-1} f\|_{\o y} = \frac {3^{-k}} 2 \|A_{\o y}^{-1} f\|_{\o y},
\]
and 
\[
\|A_{\o y}^{-1} f\|_{\o y}^2=\<A_{\o y}A_{\o y}^{-1} f,A_{\o y}^{-1} f\>_{H^{-1},H^1_0}
=\<f,u(\o y)\>_{H^{-1},H^1_0} \leq C_f \|u(\o y)\|_{H^1_0}\leq  C_f^2,
\]
where the last inequality follows from Lax-Milgram estimate since
$a(\o y)\geq 1$. This proves the estimate
\be
\Big\|u(y)-\sum_{|\nu|\leq k} u_\nu y^\nu\Big\|_{\o y}\leq C3^{-k}, \quad y\in R,
\label{oyest}
\ee
with $C:=\frac 1 2 C_f$. Using the inequalities
\[
\|v\|_y^2 \leq \frac 4 3\|v\|_{\o y}^2, \quad v\in H^1_0(\Omega),\; y\in R,
\]
and
\[
\|v\|_{H^1_0}^2 \leq \frac 2 3\|v\|_{\o y}^2, \quad v\in H^1_0(\Omega),
\]
we obtain the estimate \iref{yest} and \iref{h10est} with the modified multiplicative constants.
\end{proof}

\begin{remark}
\label{remdim}
The above lemma shows that the set
$\cM_R:=\{u(y) \; : \; y\in R\}$ can be approximated
with accuracy $C3^{-k}$ by the space
\be
V_R:={\rm span}\{u_\nu \; : \; |\nu|\leq k\}.
\ee
The dimension of $V_R$ is at most ${k+d\choose d}$,
however, as noticed in \cite{BC},
it can in fact be seen that 
\be
\label{dimVR}
\dim(V_R)\leq {k+d-1\choose d-1}.
\ee
This stems from the fact that the operators defined in the above proof satisfy the dependency relation
\[
A_{\o y} =\sum_{j=1}^d \o y_j A_{j},
\]
and therefore, one can rewrite $A_y$ as
\[
A_y:=(1+\t y_d/\o y_d) A_{\o y}+\sum_{j=1}^{d-1}( \t y_j-\t y_d\o y_j/\o y_d  )A_{j}.
\]
Using this form, the partial Neumann sum $N_ku(y)$ has at most
${k+d-1\choose d-1}$ independent terms.
\end{remark}

We shall also make use of the following adaptation of the above lemma
to the approximation of the limit solution map $y_{S^c}\mapsto u_S(y_{S^c})$,
defined by \iref{limitproblem}. Its proof is an immediate adaptation of the previous one and is therefore omitted.

\begin{lemma}
\label{lemmapolRgen}
Let $S\subset \{1,\dots,d\}$, and for some
$a_j\geq 1$, let $R$ be a rectangle of the form
\be
R=\prod_{j\in S^c} [a_j,2a_j].
\ee
Then, there exists functions
$u_\nu\in V_S$ such that
\be
\Big\|u_S(y_{S^c})-\sum_{|\nu|\leq k} u_\nu y_{S^c}^\nu\Big\|_{y_{S^c}}\leq C3^{-k}, \quad y_{S^c}\in R,
\label{yestS}
\ee
where $C:=\frac 1 {\sqrt 3} C_f$, and
\be
\Big\|u_S(y_{S^c})-\sum_{|\nu|\leq k} u_\nu y_{S^c}\Big\|_{H^1_0}\leq C3^{-k}, \quad y_{S^c}\in R,
\label{h10estS}
\ee
where $C:=\frac 1 {\sqrt 6} C_f$.
\end{lemma}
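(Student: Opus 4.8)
The plan is to replay the operator-theoretic Neumann-series argument from the proof of Lemma~\ref{lemmapolR}, but carried out entirely inside the closed subspace $V_S$ instead of the full space $H^1_0(\Omega)$. First I would recast the limit problem \iref{limitproblem} in operator form: define $A^S_{y_{S^c}}\colon V_S\to V_S'$ by
\[
\langle A^S_{y_{S^c}}v,w\rangle=\sum_{j\in S^c}y_j\int_{\Omega_j}\nabla v\cdot\nabla w\,dx=\langle v,w\rangle_{y_{S^c}},\qquad v,w\in V_S,
\]
so that $u_S(y_{S^c})$ solves $A^S_{y_{S^c}}u_S(y_{S^c})=f$. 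Since $\|\cdot\|_{y_{S^c}}$ is a full norm on $V_S$ equivalent to the $H^1_0$-norm (as recalled in \S2), Lax--Milgram gives bounded invertibility of $A^S_{y_{S^c}}$. I would then set $\overline{y}_{S^c}:=\tfrac32(a_j)_{j\in S^c}$, write $y_{S^c}=\overline{y}_{S^c}+\widetilde y_{S^c}$ with $\widetilde y_j\in[-a_j/2,a_j/2]$, and split $A^S_{y_{S^c}}=A^S_{\overline{y}_{S^c}}+\sum_{j\in S^c}\widetilde y_j A^S_j$, where $A^S_j$ carries the form $\int_{\Omega_j}\nabla v\cdot\nabla w$ restricted to $V_S$.

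The reason the adaptation is clean is structural: functions in $V_S$ have vanishing gradient on $\Omega_j$ for $j\in S$, so only the $|S^c|$ subdomains indexed by $S^c$ are ever active, and the construction depends on $y_{S^c}$ alone. Writing $g:=(A^S_{\overline{y}_{S^c}})^{-1}f$ and $B(\widetilde y_{S^c}):=\sum_{j\in S^c}\widetilde y_j (A^S_{\overline{y}_{S^c}})^{-1}A^S_j$, the equation becomes $(I+B(\widetilde y_{S^c}))u_S(y_{S^c})=g$, and the bound $\|B(\widetilde y_{S^c})\|_{\overline{y}_{S^c}\to\overline{y}_{S^c}}\leq\tfrac13$ follows exactly as before from $|\widetilde y_j|\leq\tfrac13\overline{y}_j$ together with a Cauchy--Schwarz over the subdomains $\Omega_j$, $j\in S^c$. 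The partial Neumann sum $\sum_{l=0}^k(-1)^lB(\widetilde y_{S^c})^lg$ is then a polynomial of total degree $k$ in $y_{S^c}$, its coefficients define the required $u_\nu\in V_S$, and the geometric tail yields
\[
\Big\|u_S(y_{S^c})-\sum_{|\nu|\leq k}u_\nu y_{S^c}^\nu\Big\|_{\overline{y}_{S^c}}\leq\frac{3^{-k}}{2}\,\|(A^S_{\overline{y}_{S^c}})^{-1}f\|_{\overline{y}_{S^c}}.
\]

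To close, I would bound $\|(A^S_{\overline{y}_{S^c}})^{-1}f\|_{\overline{y}_{S^c}}\leq C_f$ via $\|(A^S_{\overline{y}_{S^c}})^{-1}f\|^2_{\overline{y}_{S^c}}=\langle f,u_S(\overline{y}_{S^c})\rangle\leq C_f\|u_S(\overline{y}_{S^c})\|_{H^1_0}\leq C_f^2$, the last step being the a-priori estimate valid because $\overline{y}_j=\tfrac32 a_j\geq1$ on $S^c$. Estimates \iref{yestS} and \iref{h10estS} then follow from the same norm comparisons as in Lemma~\ref{lemmapolR}. The only step deserving a second look --- and the one I would flag as the main (if modest) obstacle --- is checking that these comparisons descend to $V_S$: for $v\in V_S$ the frozen subdomains contribute nothing, so $\|v\|_{H^1_0}^2=\sum_{j\in S^c}\|\nabla v\|_{L^2(\Omega_j)}^2$, whence $\|v\|_{H^1_0}^2\leq\tfrac23\|v\|_{\overline{y}_{S^c}}^2$, and for $y_{S^c}\in R$ one has $y_j\leq\tfrac43\overline{y}_j$, whence $\|v\|_{y_{S^c}}^2\leq\tfrac43\|v\|_{\overline{y}_{S^c}}^2$. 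Once one notes that both norms only ever see the subdomains indexed by $S^c$, every inequality from the original proof transfers with identical constants.
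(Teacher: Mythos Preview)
Your proposal is correct and is precisely the ``immediate adaptation'' the paper has in mind; in fact the paper omits the proof entirely, stating only that it follows by adapting the argument of Lemma~\ref{lemmapolR}. Your write-up fills in exactly those details --- carrying the Neumann-series construction into the closed subspace $V_S$, with the key observation that all norms involved see only the subdomains $\Omega_j$ for $j\in S^c$ --- and the constants match.
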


\subsection{Polynomial approximation on infinite rectangles}

We now consider the infinite rectangles $R_{\ell}$, corresponding
to the $\ell$ such that some of the $\ell_j$ equal $L$. We define 
\be
S:=\{j\; : \; \ell_j=L\},
\label{Sell}
\ee 
the set of such indices. When $y\in R_\ell$, we thus have
\[
y_j\geq 2^L, \quad j\in S,
\]
and so $u(y)$ should be close to $u_S(y_{S^c})$ as $L$ is large.
On the other hand $y_{S^c}$ belongs to a rectangle of the form
\[
R_{\ell_{S^c}}=\prod_{j\in S^c} [2^{\ell_j},2^{\ell_j+1}].
\]
Therefore, by Lemma \ref{lemmapolRgen}, we can approximate
$u_S(y_{S^c})$ by a polynomial of total degree $k$ in these
restricted variables.

In order to conclude that this polynomial is a good approximation to
$u(y)$ on $R_{\ell}$, we need a quantitative estimate
on the convergence of $u(y)$ towards $u_S(y_{S^c})$. Let us observe
that since
\[
\sum_{j=1}^dy_j\int_{\Omega_j} \nabla u(y)\cdot\nabla v\, dx=\<f,v\>_{H^{-1},H^1_0}=\sum_{j\in S^c}y_j\int_{\Omega_j}\nabla u_S(y_{S^c})\cdot\nabla v\, dx,\quad v\in V_S,
\]
the function $u_S(y_{S^c})$ coincides with the orthogonal projection of $u(y)$
onto $V_S$ for the $y$-norm, as well as for the $y_{S^c}$-norm:
\be
\label{projuS}
u_S(y_{S^c})=P_{V_S}^y u(y)=P_{V_S}^{y_{S^c}} u(y).
\ee
In addition, with 
\be
\ac{\Omega_S:=\bigcup_{j\in S}\,\Omega_j,}
\label{OmegaS}
\ee 
we have
\[
2^L \|\nabla u(y)\|_{L^2(\Omega_S)}^2
\leq \sum_{j\in S} y_j\int_{\Omega_j} |\nabla u(y)|^2\, dx\leq \<f,u(y)\>_{H^{-1},H^1_0}
\leq C_f^2,
\]
since $\|u(y)\|_{H^1_0}\leq C_f$, and therefore, since $\nabla u_S(y_{S^c})=0$ on $\Omega_S$, we find that
\be
\|\nabla u(y)-\nabla u_S(y_{S^c})\|_{L^2(\Omega_S)}\leq C_f2^{-L/2}.
\label{estimonS}
\ee
Our objective is to obtain a similar error bound on the remaining domains
$\Omega_j$ for $j\in S^c$. \ac{This turns out to be feasible, with an even better rate $2^{-L}$, when making certain geometric assumptions on the partition of the domain $\Omega$.}

\begin{definition}
\ac{We say that $\{\Omega_1,\dots,\Omega_d\}$ is a {\em Lipschitz partition}
if and only if for any subset $T\subset \{1,\dots,d\}$, the domain $\Omega_T=\bigcup_{j\in T}\Omega_j$ has Lipschitz boundaries.}
\end{definition}

\begin{figure}[ht]
\begin{center}
\begin{minipage}{.2\textwidth}
\begin{tikzpicture}[scale=0.5]
  \draw(0,0)--(0,5);
  \draw(5,0)--(5,5);
  \draw(0,5)--(5,5);
  \draw(0,0)--(5,0);
  \node at (2,3.5) {$\Omega_1$};
  \draw (3,0) arc (0:90:3);
  \node at (4,1.5) {$\Omega_4$};
  \draw(3,5)--(3.5,3)--(5,3);
  \node at (4.25,4) {$\Omega_2$};
  \draw(3.5,3)--(2.598,1.5);
  \node at (1.2,1.2) {$\Omega_3$};
\end{tikzpicture}
\end{minipage}
\hspace{1cm}
\begin{minipage}{.2\textwidth}
\begin{tikzpicture}[scale=0.5]
  \draw(0,0)--(0,5);
  \draw(5,0)--(5,5);
  \draw(0,5)--(5,5);
  \draw(0,0)--(5,0);
  \draw(2.5,0)--(2.5,5);
  \draw(0,2.5)--(5,2.5);
  \node at (1.25,3.75) {$\Omega_1$};
  \node at (3.75,3.75) {$\Omega_2$};
  \node at (1.25,1.25) {$\Omega_3$};
  \node at (3.75,1.25) {$\Omega_4$};
\end{tikzpicture}
\end{minipage}
\end{center}
\caption{A Lipschitz partition of $\Omega$ (left) and a counter-example (right) since $\Omega_1\cup\Omega_4$ is not Lipschitz.}
\label{fig2}
\end{figure}
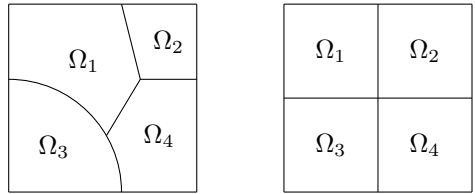

\ac{Note that such a property is stronger than just saying that each domain is Lipschitz, see Figure \ref{fig2} (right) for
a counter-example. In a Lipschitz partition, all subdomains $\Omega_j$ are Lipschitz, and the common 
boundary between two subdomains is either empty or a $(n-1)$-dimensional surface, as illustrated on Figure \ref{fig2} (left). 
In particular, it is easily checked that partitions consisting of a background domain
and well separated subdomains that have Lipschitz boundaries fall in this category. Similar to the $\Omega_T$,
the individual $\Omega_j$ could have several connected components, that should then be well separated.
Here by ``well separated'', we mean that $\delta$-neighbourhoods of the subdomains remain disjoints for some $\delta>0$.}

\ac{For the inner domains $\Omega_T$ such that $\partial\Omega_T\cap \partial\Omega=\emptyset$, the classical
Stein's extension theorem \cite{St} guarantees the existence of continuous extension operators 
\[
E_T: H^1(\Omega_T)\to H^1(\Omega),
\]
that satisfy $(E_T v)_{|\Omega_T}=v$ for all $v\in H^1(\Omega_T)$. We refer to chapter 5 of \cite{A} for a relatively simple construction of the extension operator $E_j$ by local reflection after
using a partitioning of unity along the boundary of $\Omega_T$ and local 
transformations mapping the boundary to the hyperplane $\R^{n-1}$.}

\ac
{For the domains $\Omega_T$ touching the boundary $\partial \Omega$, these operators are modified in order to take 
into account the homogeneous boundary condition, and we refer to \cite{Zen} for such adaptations. Here, the
relevant space is 
\be
\tilde H^1(\Omega_T):=R_T(H^1_0(\Omega)),
\label{tildeH}
\ee
where $R_T$ is the restriction to $\Omega_T$, over which $v\mapsto \|\nabla v\|_{L^2(\Omega_T)}$ is equivalent to 
the $H^1$ norm by Poincar\'e inequality. Then, there exists a continuous extension operator
\[
E_T: \t H^1(\Omega_T)\to H^1_0(\Omega).
\]
Note that the norm of all these operators
depends on the geometry of the partition.} These operators 
are instrumental in proving the following convergence estimate.

\begin{lemma}
\label{lemmaestimlim}
Assume that $\{\Omega_1,\dots,\Omega_d\}$ \ac{is a Lipschitz partition} of $\Omega$.
Then there exists a constant $C_0$ that only depends \ac{on the 
geometry of the partition} such that for any $S\subset \{1,\dots,d\}$ and
$y=(y_S,y_{S^c})\in Y'$, one has
\be
\label{estimconvy}
\ac{\|u(y)-u_S(y_{S^c})\|_{H^1_0}\leq C_0C_f \max_{j\in S} y_j^{-1}.}
\ee
In particular, for the infinite rectangle $R_{\ell}$,
\be
\ac{\|u(y)-u_S(y_{S^c})\|_{H^1_0}\leq C_0C_f 2^{-L}, \quad y\in R_{\ell},}
\label{estimconvL}
\ee
with $S$ defined by \iref{Sell}.
\end{lemma}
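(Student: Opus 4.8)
My plan is to work with the error $w := u(y)-u_S(y_{S^c})$ and to control its gradient separately over the stiff region $\Omega_S$ and the soft region $\Omega_{S^c}$. First I would record the two structural facts that make $w$ tractable. By \iref{projuS}, $u_S(y_{S^c})=P_{V_S}^y u(y)$, so $w$ is $y$-orthogonal to $V_S$; subtracting the two variational formulations gives, for every $v\in H^1_0(\Omega)$,
\[
\<w,v\>_y=\<f,v\>_{H^{-1},H^1_0}-\<u_S(y_{S^c}),v\>_{y_{S^c}},
\]
whose right-hand side vanishes whenever $v\in V_S$. Testing with $v\in H^1_0(\Omega_{S^c})\subset V_S$ then shows that $w$ solves the homogeneous equation $-\divergence(a(y)\nabla w)=0$ in the interior of $\Omega_{S^c}$, i.e. $w$ is $a(y)$-harmonic there, while on $\Omega_S$ one has $\nabla w=\nabla u(y)$ together with the crude energy bound \iref{estimonS}, namely $\|\nabla w\|_{L^2(\Omega_S)}\leq C_f(\min_{j\in S}y_j)^{-1/2}$. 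Since the target rate $\max_{j\in S}y_j^{-1}=(\min_{j\in S}y_j)^{-1}$ is one full power better, the whole difficulty is to upgrade this half-power estimate to a full power, on both subdomains.

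The gain of the extra power I would trace to the source term. Writing $c_j$ for the constant value of $u_S(y_{S^c})$ on $\Omega_j$, $j\in S$ (note $c_j=0$ whenever $\Omega_j$ meets $\partial\Omega$, forced by $u_S\in H^1_0$), I would compare $u(y)$ with the function $\hat u\in H^1_0(\Omega)$ equal to $u_S(y_{S^c})$ on $\Omega_{S^c}$ and, on each $\Omega_j$ with $j\in S$, equal to the local solution of $-y_j\Delta\hat u=f$ with Dirichlet datum $c_j$ on $\partial\Omega_j$. Then $\hat u-u_S(y_{S^c})$ vanishes on $\Omega_{S^c}$ and solves a homogeneous-boundary Poisson problem with data $f/y_j$ on each $\Omega_j$, so the local a priori estimate gives directly $\|\hat u-u_S(y_{S^c})\|_{H^1_0}\leq C\,C_f(\min_{j\in S}y_j)^{-1}$, which is exactly the clean rate. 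What remains is $\|u(y)-\hat u\|_{H^1_0}$, a quantity driven purely by the interface: $u(y)-\hat u$ is $a(y)$-harmonic on each $\Omega_j$ and coincides with $w$ on $\Omega_{S^c}$, so it is entirely governed by the trace of $w$ on the internal interface $\Gamma:=\partial\Omega_S\cap\Omega$. To control that trace I would exploit the $a(y)$-harmonic minimality of $w$ on $\Omega_{S^c}$: testing the displayed identity with $w-E_S(w_{|\Omega_S})\in V_S$ yields $\|w\|_{y_{S^c}}\leq\|E_S(w_{|\Omega_S})\|_{y_{S^c}}$, transferring the control from one side of $\Gamma$ to the other through the extension operator.

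The main obstacle, and the reason the Lipschitz-partition hypothesis is needed, is to carry out this transfer with constants that do not degrade under high contrast among the coefficients $y_j$, $j\in S^c$: the naive bound on $\|E_S(w_{|\Omega_S})\|_{y_{S^c}}$ costs a spurious factor $\max_{j\in S^c}y_j^{1/2}$, and the coupled interface estimate between $\|\nabla w\|_{L^2(\Omega_S)}$ and $\|\nabla w\|_{L^2(\Omega_{S^c})}$ does not obviously close with geometry-only constants. This is precisely where the existence of continuous extension operators $E_T$ for \emph{every} subset $T\subset\{1,\dots,d\}$ enters: it lets one handle the coalescence of arbitrarily stiff subdomains of $S^c$ — which in the contrast limit behave as a single rigid inclusion $\Omega_T$ — and obtain interface and trace estimates with a constant $C_0$ depending only on the geometry of the partition. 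I expect this contrast-robust interface analysis to be the genuinely hard step; once it is in place, combining it with the clean source-driven bound on $\hat u-u_S(y_{S^c})$ yields \iref{estimconvy}, and specializing to $y\in R_\ell$ with $y_j\geq 2^L$ for $j\in S$ gives \iref{estimconvL}.
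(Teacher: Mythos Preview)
Your proposal correctly identifies the key structural facts --- the $y$-orthogonality of $w=u(y)-u_S(y_{S^c})$ to $V_S$, the harmonicity on $\Omega_{S^c}$, and the role of extension operators --- and you rightly pinpoint the obstruction: the naive transfer estimate $\|w\|_{y_{S^c}}\leq\|E_S(w_{|\Omega_S})\|_{y_{S^c}}$ picks up a spurious factor $(\max_{j\in S^c}y_j)^{1/2}$, which ruins contrast-robustness. However, the proposal stops precisely at this hard step, declaring it ``expected'' without carrying it out, so as written it is an outline rather than a proof. Moreover, your decomposition via $\hat u$ does not actually reduce the difficulty: since $u(y)-\hat u=w$ on $\Omega_{S^c}$, bounding $\|u(y)-\hat u\|_{H^1_0}$ still requires exactly the contrast-robust control of $\|\nabla w\|_{L^2(\Omega_{S^c})}$ that you left open.

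The missing idea, which is the heart of the paper's argument, is an \emph{iterative} use of the extension operators $E_T$, peeling off the subdomains of $S^c$ one at a time. After first reducing (by a three-term comparison) to the case where the $y_j$ for $j\in S$ are the \emph{largest} components and ordering $y_1\geq\dots\geq y_d$ so that $S=\{1,\dots,|S|\}$, one shows inductively that $\|\nabla w\|_{L^2(\Omega^{j+1})}$ is controlled by $\|\nabla w\|_{L^2(\Omega^j)}$, where $\Omega^j:=\o\Omega_1\cup\dots\cup\o\Omega_j$. At each step one extends $w-c$ from $\Omega^j$ (with $c$ the piecewise constant putting $w-c$ in $\tilde H^1(\Omega^j)$) to $v\in H^1_0(\Omega)$, and tests the orthogonality $\<w,w-v\>_y=0$. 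The crucial point is that, by the ordering, $y_{j+1}=\max_{i>j}y_i$, so the weight over $\Omega\setminus\Omega^j$ can be factored out uniformly, yielding a recursion with purely geometric constants. Iterating down to $\Omega^{|S|}=\Omega_S$ gives $\|w\|_{H^1_0}^2\leq C_0\|\nabla w\|_{L^2(\Omega_S)}^2$; combining this with the energy identity $\|w\|_y^2=\<f,w\>_{H^{-1},H^1_0}$ then upgrades the half-power estimate to the full power $(\min_{j\in S}y_j)^{-1}$ and closes the proof. This ordered, layer-by-layer induction is what your sketch is missing.
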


\begin{proof} \ac{We first note that it suffices to prove \iref{estimconvy} in the
particular case where the largest $y_j$ are those for which $j\in S$. 
Indeed, if this is not the case,
we use the decomposition
\[
u(y)-u_S(y_{S^c})=(u(y)-u_{S'}(y_{S'^c}))-(u(y')-u_{S'}(y_{S'^c}))+(u(y')-u_S(y_{S^c})),
\]
with $S'=\{i\; :\; y_i\geq \min_{j\in S}y_j\}$ and $y'$ defined by $y'_j=\max_{i=1,\dots,d} y_i$ if $j\in S$, $y'_j=y_j$ otherwise,
so that each term falls in this particular case and will be bounded in $H^1_0$ norm
by $C_0C_f \max_{j\in S} y_j^{-1}$. This leads to the same estimate \iref{estimconvy} up to a factor $3$ in constant $C_0$.
In addition, up to reordering the subdomains $\Omega_j$, we may assume $y_1\geq\dots\geq y_d$
and therefore $S=\{1,\dots,|S|\}$.
}

\ac{Fix $j\geq |S|$, and denote $u=u(y)$ and $u_S=u_S(y_{S^c})$ for simplicity. We define the Lipschitz domain
$\Omega^j=\o\Omega_1\cup\dots\cup\o\Omega_j$, remarking that 
\[
\Omega_S=\bigcup_{j\in S}\Omega_j=\Omega^{|S|}.
\]
Poincar\'e's inequality ensures that there exists a function $c$ on $\Omega^j$, constant on any connected component of $\Omega^j$, and null on $\partial\Omega\cap\Omega^j$, such that
\[
\|u-u_S-c\|_{H^1(\Omega^j)}\leq C_P\|\nabla (u-u_S)\|_{L^2(\Omega^j)},
\]
with $C_P$ the maximal Poincar\'e constant of all unions of subdomains from the partition. Moreover, there is an extension $v\in H^1_0(\Omega)$ of $u-u_S-c\in \t H^1(\Omega^j)$ such that
\[
\|v\|_{H^1_0(\Omega)}\leq C_E \|u-u_S-c\|_{H^1(\Omega^j)}\leq C_EC_P\|\nabla (u-u_S)\|_{L^2(\Omega^j)},
\]
with $C_E$ the maximal norm of all extension operators $E_{T}$, $T\subset \{1,\dots,d\}$.}

\ac{As $u-u_S-v=c$ on $\Omega_S\subset \Omega^j$, the function $u-u_S-v$ is in $V_S$, and therefore orthogonal to $u-u_S=u-P_{V_S}^{y}u$ for the $\|\cdot\|_y$ norm:
\begin{align*}
0&=\<u-u_S,u-u_S-v\>_y\\
&=\sum_{i=1}^d y_i\int_{\Omega_i}|\nabla (u-u_S)|^2-\sum_{i=1}^d y_i\int_{\Omega_i}\nabla (u-u_S)\cdot \nabla v\\
&=\sum_{i>j}y_i\int_{\Omega_i}|\nabla (u-u_S)|^2-\sum_{i>j} y_i\int_{\Omega_i}\nabla (u-u_S)\cdot \nabla v
\end{align*}
since $\nabla v= \nabla (u-u_S)$ on $\Omega^j$. In particular, we obtain
\begin{align*}
y_{j+1}\|\nabla (u-u_S)\|_{L^2(\Omega_{j+1})}^2 &\leq \sum_{i>j}y_i\int_{\Omega_i}|\nabla (u-u_S)|^2\\
&\leq y_{j+1}\int_{\Omega\setminus \Omega^j}|\nabla (u-u_S)\cdot \nabla v| \\
&\leq y_{j+1}\|u-u_S\|_{H^1_0(\Omega)}\|v\|_{H^1_0(\Omega)}\\
&\leq y_{j+1}\|u-u_S\|_{H^1_0(\Omega)}C_PC_E\|\nabla (u-u_S)\|_{L^2(\Omega^j)},
\end{align*}
and therefore
\[
\|\nabla (u-u_S)\|_{L^2(\Omega^{j+1})}^2\leq (1+C_PC_E)\|\nabla(u-u_S)\|_{L^2(\Omega)}\|\nabla (u-u_S)\|_{L^2(\Omega^j)}.
\]
Applying this inequality inductively for $j=d-1,\dots,d-k$, we get
\[
\|\nabla (u-u_S)\|_{L^2(\Omega)}\leq (1+C_PC_E)^{2^k-1} \|\nabla (u-u_S)\|_{L^2(\Omega^{d-k})},
\]
for any $k=1,\dots,d-|S|$. For $k=d-|S|$, this results in the bound
\be
\|\nabla (u-u_S)\|_{L^2(\Omega)}^2\leq C_0 \|\nabla (u-u_S)\|_{L^2(\Omega_S)}^2
=C_0 \|\nabla u\|_{L^2(\Omega_S)}^2,
\ee
for any non-empty $S$, with $C_0=(1+C_PC_E)^{2^{d-1}}$.}

\ac{We now write
\begin{align*}
(\min_{i\in S}y_i)\|\nabla (u-u_S)\|_{L^2(\Omega_S)}^2&\leq \|u-u_S\|_y^2=\<u,u-2u_S\>_y+\<u_S,u_S\>_{y_{S^c}}\\
&=\<f,u-u_S\>_{H^{-1},H^1_0} \leq C_f \|\nabla(u-u_S)\|_{L^2(\Omega)},
\end{align*}
which, combined to the previous estimate, gives
\[
\|u-u_S\|_{H^1_0}=\|\nabla (u-u_S)\|_{L^2(\Omega)}\leq C_0C_f \max_{i\in S}y_i^{-1},
\]
therefore proving \iref{estimconvy}.
For \iref{estimconvL}, we simply notice that $\max_{j\in S} y_j^{-1}\leq 2^{-L}$ for $y\in Y'\cap R_\ell$, and use a continuity argument when $y$ takes infinite values.}
\end{proof}

Combining the estimate \iref{estimconvL} from the above lemma with 
\iref{h10estS} from Lemma \ref{lemmapolRgen}, we obtain the following estimate
for polynomial approximation on an infinite rectangle $R_{\ell}$:
\be
\Big\|u(y)-\sum_{|\nu|\leq k} u_\nu y_{S^c}^\nu\Big\|_{H^1_0}\leq \frac{C_f}{\sqrt 6} \,3^{-k}+\ac{C_0C_f2^{-L},} \quad 
y\in R_\ell,
\label{h10polS}
\ee
where $C_0$ is the constant in \iref{estimconvL}. This estimate hints 
how the level $L$ in the partition should be tuned to the total polynomial degree $k$,
so that the two contributions in the above estimate are of the same order. 

\begin{remark}
\ac{Note that the constant $C_0=(1+C_PC_E)^{2^{d-1}}$ becomes prohibitive even for moderate values of $d$. However, under more restrictive geometric assumptions, for instance if the subdomains $\o \Omega_2,\dots,\o\Omega_d$ are disjoint inclusions in a background $\Omega_1$, better bounds can be obtained, with a constant $C_0$ that does not suffer a similar curse of dimensionality, by replacing the induction in the proof by a two-step procedure, consisting of extensions first from the high-diffusivity inclusions to the background, and then to the whole domain $\Omega$.
}
\end{remark}

\subsection{Approximation rates and $n$-widths}
\label{subsection 3 3}

We are now in position to establish an approximation result for the 
reduced model spaces. For this purpose, we fix the smallest level $L=L_k\geq 1$ such
that 
\[
\ac{C_0C_f2^{-L}}\leq \frac{C_f}{\sqrt 3} \,3^{-k}.
\]
In particular $L$ scales linearly with $k$, with the bound $\alpha k+\beta\leq L_k\leq \alpha k+\gamma$, where
\be
\label{estimL}
\alpha:=\frac {\ln 3}{\ln 2}, \quad \beta :=\frac{\ln(\sqrt 3 C_0)}{\ln 2}, \quad \gamma :=\frac{\ln(2\sqrt 3C_0)}{\ln 2}.
\ee
Then, the polynomial approximation estimates
\iref{h10est} and \iref{h10polS} show that for each
$\ell\in \{0,\cdots,L_k\}^d$, there exist functions 
$u_{\ell,\nu}\in H^1_0(\Omega)$ such that
\[
\Big\|u(y)-\sum_{|\nu|\leq k} u_{\ell,\nu} y^\nu\Big\|_{H^1_0}\leq \left(\frac{C_f}{\sqrt 6} +\frac{C_f}{\sqrt 3} \right)3^{-k}\leq C_f 3^{-k}, \quad  y\in R_\ell.
\]
Note that in the case of an infinite rectangle $R_\ell$, the $u_{\ell,\nu}$
are non trivial only for monomials of the form $y_{S^c}^\nu$ and they 
belong to $V_S$, where $S:=\{j\; : \; \ell_j=L_k\}$.

Thus the solutions $u(y)$ for $y\in R_\ell$ are approximated
with accuracy $C_f 3^{-k}$ in the space
\[
V_{\ell,k}:={\rm span}\{u_{\ell,\nu}\; : \; |\nu|\leq k\},
\]
which in view of Remark \ref{remdim} has dimension at most 
${k+d-1\choose d-1}$.

Note also that approximating the reduced manifold $\cN$ defined in \iref{normmanifold} 
requires a smaller subset of rectangles, since
\[
\{y\in \wt Y' \;: \;\min y_j=1\}\subset \bigcup_{\ell\in  E_k} R_\ell,
\quad  \quad E_k:=\{0,\cdots,L_k\}^d\setminus  \{1,\cdots,L_k\}^d.
\]
We thus introduce the reduced model space
\be
\label{Vn}
V_n:=\bigoplus_{\ell\in E_k} V_{\ell,k}, \quad n=\dim(V_n)\leq \#(E_k) {k+d-1\choose d-1},
\ee
and find that
\be
\label{approxk}
\|u(y)-P_{V_n}u(y)\|_{H^1_0}\leq C_f 3^{-k},
\ee
for all $y\in \wt Y'$ such that $\min y_j=1$. In view of \iref{estimL},
there exists a constant $C$ that depends on $d$ and $C_0$, such that
\be
n\leq ((L_k+1)^d-L_k^d){k+d-1\choose d-1}\leq C(k+1)^{2d-2}.
\label{estimn}
\ee
This leads to the following approximation theorem.

\begin{theorem}
\label{width}
Assume that the partition has the geometry of disjoint inclusions.
The reduced basis space $V_n$ defined in \iref{Vn} then satisfies
\be
\|u(y)-P_{V_n}u(y)\|_{H^1_0}
\leq C\exp\(-cn^{\frac 1 {2d-2}}\),
\label{estimate2d}
\ee
for all $y\in \wt Y'=[1,\infty]^d$ such that $\min y_j=1$. The Kolmogorov
$n$-width \iref{nwidth} of the reduced manifold $\cN$ satisfies
\be
d_n(\cN)_{H^1_0} \leq C\exp\(-cn^{\frac 1 {2d-2}}\).
\label{nw}
\ee
Over the full manifold $\o\cM$, one has the estimate in relative error
\be
\|u(y)-P_{V_n}u(y)\|_{H^1_0} \leq 
C\exp\(-cn^{\frac 1 {2d-2}}\)\|u(y)\|_{H^1_0},
\label{estimate2drel}
\ee
for all $y\in \wt Y=]0,\infty]^d$. The positive constants $c$ and $C$ 
only depend on $d$, $C_f$, and on the geometry of 
the partition through the constant $C_0$.
\end{theorem}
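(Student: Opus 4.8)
The plan is to assemble Theorem \ref{width} from the pieces already established, treating the three displayed estimates in order. The core estimate to prove is \iref{estimate2d}. By construction, the reduced space $V_n$ in \iref{Vn} is the direct sum of the local spaces $V_{\ell,k}$ over $\ell\in E_k$, and \iref{approxk} already gives $\|u(y)-P_{V_n}u(y)\|_{H^1_0}\leq C_f 3^{-k}$ for every $y\in\wt Y'$ with $\min y_j=1$. The point is simply that $V_{\ell,k}\subset V_n$ for the rectangle $R_\ell$ containing $y$, so the $H^1_0$-orthogonal projector onto the larger space $V_n$ does at least as well as the local polynomial approximant. Thus the whole game is to re-express the algebraic rate $3^{-k}$ in the dimension $n$ as the sub-exponential rate $\exp(-cn^{1/(2d-2)})$.

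To carry this out I would invert the dimension count \iref{estimn}. Since $n\leq C(k+1)^{2d-2}$, one has $(k+1)\geq (n/C)^{1/(2d-2)}$, hence $k\geq c' n^{1/(2d-2)}-1$ for a suitable $c'$ depending on $d$ and $C_0$. Substituting into $3^{-k}=\exp(-k\ln 3)$ gives $C_f 3^{-k}\leq C\exp(-c n^{1/(2d-2)})$ with $c=c'\ln 3$ and $C$ absorbing the constant offset. This yields \iref{estimate2d} directly. The bound \iref{nw} on the $n$-width $d_n(\cN)_{H^1_0}$ then follows immediately, because $\cN$ is covered by the rectangles indexed by $E_k$ (as noted before \iref{Vn}), so every $u(y)\in\cN$ satisfies the same bound, and $d_n(\cN)_{H^1_0}\leq \mathrm{dist}(\cN,V_n)_{H^1_0}$ by the definition \iref{nwidth}.

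The relative-error estimate \iref{estimate2drel} over the full manifold $\o\cM$ is then obtained by the homogeneity argument already used in the proof of Theorem \ref{theorelprojerror}. For $y\in\wt Y$ other than $(\infty,\dots,\infty)$, write $t^{-1}=\min_j y_j$, so that $u(ty)\in\cN$ by \iref{homog}, apply \iref{estimate2d} to $u(ty)$, and rescale: $\|u(y)-P_{V_n}u(y)\|_{H^1_0}=t\|u(ty)-P_{V_n}u(ty)\|_{H^1_0}\leq t\,C\exp(-cn^{1/(2d-2)})$. Dividing by the lower bound $\|u(y)\|_{H^1_0}=t\|u(ty)\|_{H^1_0}\geq t c_f$ from \iref{frameh10} converts this into the relative estimate \iref{estimate2drel}, after absorbing $c_f^{-1}$ into $C$; the excluded point $(\infty,\dots,\infty)$ needs no argument since $u=0$ there.

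I do not expect a genuine obstacle here, since all the heavy lifting has already been done: Lemma \ref{lemmapolR} supplies the local polynomial rate on inner rectangles, Lemma \ref{lemmaestimlim} (via the disjoint-inclusions geometry) controls the convergence to limit solutions on infinite rectangles, and the dimension bookkeeping \iref{estimn} encodes the tuning $L_k\sim\alpha k$ from \iref{estimL}. The only point requiring a little care is tracking how the constants $c$ and $C$ depend on $d$, $C_f$, and the geometry through $C_0$ — in particular making sure the $\#(E_k)\leq (L_k+1)^d-L_k^d=O(k^{d-1})$ factor and the local dimension ${k+d-1\choose d-1}=O(k^{d-1})$ factor multiply to the stated $O(k^{2d-2})$, which is exactly what produces the exponent $\frac{1}{2d-2}$ rather than $\frac{1}{d}$ and reflects the curse of dimensionality flagged in the abstract.
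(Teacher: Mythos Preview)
Your proposal is correct and follows exactly the same approach as the paper: combine \iref{approxk} with the dimension count \iref{estimn} to get \iref{estimate2d}, deduce \iref{nw} immediately, and then repeat the homogeneity-plus-lower-bound argument from Theorem \ref{theorelprojerror} to obtain \iref{estimate2drel}. The paper's own proof is a terse three lines saying precisely this; you have simply spelled out the details.
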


\begin{proof}
The estimate \iref{estimate2d} follows directly by combining
\iref{approxk} and \iref{estimn}, and \iref{nw} is an immediate consequence. We then derive \iref{estimate2drel} by using the homogeneity property \iref{homog}
and the lower inequality in \iref{frameh10}, similar to the proof
of \iref{releps} in Theorem \ref{theorelprojerror}.
\end{proof}

\begin{remark}
In the above construction of $V_n$, the dimension $n$ only takes the values
$n_k:=\#(E_k) {k+d-1\choose d-1}$ for $k\geq 0$. However it is easily seen
that if we set $V_n=V_{n_k}$ for $n_{k}\leq n< n_{k+1}$,
then all the estimates in the above theorem remain valid up to a change
in the constants $(c,C)$. 
\end{remark}

\begin{remark}
Note that the union of the $V_{\ell,k}$ for $\ell\in E_k$ would suffice
to approximate $\cN$ with uniform accuracy $C_f 3^{-k}$, their sum $V_n$
is an overkill. When $y$ is known, for example in forward modeling,
it is therefore possible to first identify the proper space $V_{\ell,k}$ associated
to the rectangle $R_{\ell}$ that contains $y$, and build the approximation
to $u(y)$ from this space. This nonlinear reduced modeling strategy
has been studied in \cite{BCDGJP} with similar local polynomial approximation under UEA, and in \cite{eftang2010,maday2013,zou2019adaptive} with local
reduced basis. The natural benchmark is given by the notion of library width
introduced in \cite{Tem}, that is defined for any compact set $\cK$ in a Banach
space $V$ as
\be
d_{n,N}(\cK)_V:=\inf_{\#(\cL_n)\leq N}\sup_{u\in\cK} \min_{V_n\in \cL_n}\min_{v\in V_n} \|u-v\|_V,
\ee
where the first infimum is taken over all libraries $\cL_n$ of $n$-dimensional
spaces with cardinality at most $N$. Our results thus show that
\[
d_{n,N}(\cN)_{H^1_0} \leq C_f3^{-k} \sim C\exp(-cn^{\frac 1 d}),
\quad n:={k+d-1\choose d-1}, \; N=(L_k+1)^d-L_k^d.
\]
\ac{Note that the above sub-exponential rate can be misleading due to fact that the constant $c$ 
has a hidden dependence in $d$. As an example, up to the constant $C_f$, we find that 
taking $k=4,7,9$ leads to error bounds $3^{-k}$ of order $10^{-2},10^{-3},10^{-4}$, with $n=15,36,55$ 
for $d=3$, and $n=35,120,220$ for $d=4$, which is far better than the value of $\exp(-n^{\frac 1 d})$.
}

\end{remark}

\begin{remark}
In view of the results from \cite{BCDDPW}
and \cite{DPW}, we are ensured that a proper selection of reduced basis
elements in the manifold $\cN$ should generate spaces $V_n$ that perform
at least with the same exponential rates as those achieved by the
spaces $V_n$ in Theorem \ref{width}. As explained in the introduction, 
reduced basis spaces may perform significantly
better than reduced model spaces based on polynomial or piecewise
polynomial approximation. \ac{This occurs in particular when the polynomial
coefficients have certain linear dependency, as established in \cite{BC}
for the elliptic problem with piecewise constant coefficients in the low contrast regime,
and recalled in Remark 3.2. There, it is shown that the rate ${\cal O}(\exp(-cn^{\frac 1 d}))$
is at least improved to ${\cal O}(\exp(-cn^{\frac 1 {d-1}}))$ and that further improvements in the rate
may result from certain symmetry properties of the domain partition, however not circumventing the
curse of dimensionality. While we do not pursue this analysis
in the present high contrast setting, we expect similar results to hold.
}
\label{remark reduced basis space}
\end{remark}

\section{Forward modeling and inverse problems}

\subsection{Galerkin projection}

In the context of forward modeling, the reduced model space $V_n$
is used to approximate the parameter to solution map, by 
a map
\[
y\mapsto u_n(y) \in V_n,
\]
computed through the Galerkin method: $u_n(y)\in V_n$ is such that
\[
\sum_{j=1}^dy_j\int_{\Omega_j} \nabla u_n(y)\cdot\nabla v\, dx=\<f,v\>_{H^{-1},H^1_0},\quad v\in V_n.
\]
Therefore $\<u_n(y),v\>_y=\<u(y),v\>_y$, that is 
\[
u_n(y)=P^y_{V_n}u(y),
\]
where $P^y_{V_n}$ is the projection onto $V_n$ with respect to norm $\|\cdot\|_y$.

Hence, one would like to derive estimates on $\|u(y)-P^y_{V_n}u(y)\|_{H^1_0}$
in place of the estimates on $\|u(y)-P_{V_n}u(y)\|_{H^1_0}$ that we have obtained
so far, since $P_{V_n}u(y)$ is not practically accessible. As explained
in the introduction, we cannot be satisfied with combining the latter estimates
with the bound
\[
\ac{\|u(y)-P_{V_n}^y u(y)\|_{H^1_0}\leq \kappa(y)^{1/2}\|u(y)-P_{V_n} u(y)\|_{H^1_0}}
\]
derived from Cea's lemma, since the multiplicative constant $\kappa(y)$ from \iref{contrast} is not uniformly bounded over the manifolds 
$\cM$, $\cB$ or $\cN$.  Here, we shall employ another approach to derive the same rates of convergence for $\|u(y)-P^y_{V_n}u(y)\|_{H^1_0}$.

One first observation is that in order for Galerkin projection 
$P_{V_n}^y$ onto a reduced model space $V_n$ to satisfy a
convergence bound in relative error, it is critical that 
this space contains some functions from the limit spaces
$V_S$. 
This is expressed by the following result.

\begin{proposition}
Assume that there exists $S\subsetneq\{1,\dots,d\}$ such that 
$V_n \cap V_S=\{0\}$. Then for any $C\in ]0,1[$, there exists $y\in Y'$ such that
\be
\label{nonrel}
\|u(y)-P^y_{V_n}u(y)\|_{H^1_0}\geq C\|u(y)\|_{H^1_0}.
\ee
\end{proposition}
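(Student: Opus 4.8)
The plan is to exploit the homogeneity and limit-solution structure to construct a parameter $y$ for which the energy-norm projection $P_{V_n}^y u(y)$ fails badly in the $H^1_0$-norm, precisely because $V_n$ cannot capture the limit behavior on the subdomains $\Omega_j$ with $j\in S$. The intuition is that as $y_j\to\infty$ for $j\in S$, the true solution $u(y)$ converges strongly in $H^1_0$ to the limit solution $u_S(y_{S^c})\in V_S$ (Lemma \ref{limlemma}), while the Galerkin projection onto $V_n$ is blind to this, since the energy inner product $\<\cdot,\cdot\>_y$ weights the subdomains in $S$ by the huge factors $y_j$. Because $V_n\cap V_S=\{0\}$, no element of $V_n$ is constant on all of $\Omega_S$, so every candidate $v\in V_n$ has $\|\nabla v\|_{L^2(\Omega_S)}>0$, and this gradient energy gets amplified by $y_j$ in the energy norm.

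\medskip
\noindent\textbf{Key steps.} First I would fix a finite $y_{S^c}$ and consider the limit solution $\bar u:=u_S(y_{S^c})\in V_S$, which by the framing \iref{frameh10} (applied after normalization via homogeneity) we may take to have $\|\bar u\|_{H^1_0}$ bounded below by a positive constant. Next I would send $y_j\to\infty$ for $j\in S$ and analyze the Galerkin projection $w^{(y)}:=P_{V_n}^y u(y)$. The central claim is that $\|\nabla w^{(y)}\|_{L^2(\Omega_S)}\to 0$ as $y_j\to\infty$: indeed, by Galerkin optimality in the $\|\cdot\|_y$ norm, $\|u(y)-w^{(y)}\|_y\leq \|u(y)-v\|_y$ for any $v\in V_n$; choosing a fixed convenient $v$ and using $\|u(y)\|_y\leq C_f$ from Remark \ref{remframey}, one controls $\sum_{j\in S} y_j\|\nabla w^{(y)}\|_{L^2(\Omega_j)}^2$, forcing $\|\nabla w^{(y)}\|_{L^2(\Omega_S)}^2=O(1/\min_{j\in S}y_j)\to 0$. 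Since $V_n$ is finite-dimensional and $V_n\cap V_S=\{0\}$, the quantity $v\mapsto \|\nabla v\|_{L^2(\Omega_S)}$ is a norm on $V_n$ equivalent to $\|\cdot\|_{H^1_0}$ on $V_n$; hence $\|\nabla w^{(y)}\|_{L^2(\Omega_S)}\to 0$ forces $\|w^{(y)}\|_{H^1_0}\to 0$. Meanwhile $u(y)\to \bar u$ in $H^1_0$ with $\|\bar u\|_{H^1_0}$ bounded below, so $\|u(y)-w^{(y)}\|_{H^1_0}\to \|\bar u\|_{H^1_0}$ while $\|u(y)\|_{H^1_0}\to\|\bar u\|_{H^1_0}$, giving a ratio tending to $1>C$. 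Choosing $y_j$ large enough yields \iref{nonrel}.

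\medskip
\noindent\textbf{Main obstacle.} The delicate point is the equivalence of norms argument combined with the quantitative decay of $\|\nabla w^{(y)}\|_{L^2(\Omega_S)}$. One must verify that $\|\nabla v\|_{L^2(\Omega_S)}$ is genuinely a norm on the finite-dimensional $V_n$, which requires exactly the hypothesis $V_n\cap V_S=\{0\}$: if $\|\nabla v\|_{L^2(\Omega_S)}=0$ then $v$ is constant on each connected component of each $\Omega_j$, $j\in S$, i.e. $v\in V_S$, whence $v=0$. By finite dimensionality this norm is then equivalent to $\|\cdot\|_{H^1_0}$, with an equivalence constant depending on $V_n$ and $S$ but crucially \emph{not} on $y$; this $y$-uniformity is what converts the vanishing of the localized gradient energy into the vanishing of the full $H^1_0$-norm of $w^{(y)}$, and it is the step where care is needed to keep all constants independent of the diverging parameters.
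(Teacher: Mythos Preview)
Your argument is correct and rests on the same key idea as the paper's proof: since $V_n\cap V_S=\{0\}$ and $V_n$ is finite-dimensional, $v\mapsto\|\nabla v\|_{L^2(\Omega_S)}$ is a norm on $V_n$ equivalent to $\|\cdot\|_{H^1_0}$, and the energy-norm contraction $\|P^y_{V_n}u(y)\|_y\leq\|u(y)\|_y\leq C_f$ then forces $\|P^y_{V_n}u(y)\|_{H^1_0}$ to be small when the $y_j$, $j\in S$, are large. The paper's version is slightly more economical: rather than invoking Lemma~\ref{limlemma} and passing to the limit, it fixes $y_j=\eps^{-2}$ for $j\in S$, $y_j=1$ for $j\in S^c$, and chains the inequalities $\tfrac{\alpha}{\eps}\|v\|_{H^1_0}\leq\|v\|_y\leq\|u(y)\|_y\leq C_f\leq\tfrac{C_f}{c_f}\|u(y)\|_{H^1_0}$ (with $\alpha$ your norm-equivalence constant) to get $\|v\|_{H^1_0}\leq(1-C)\|u(y)\|_{H^1_0}$ directly for a suitable explicit $\eps$, bypassing any appeal to the limit solution.
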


\begin{proof} Since $V_n \cap V_S=\{0\}$, the quantity $\|\nabla v\|_{L^2(\Omega_S)}$ is a norm on $V_n$ and one can define
\[
\alpha=\min_{v\in V_n}\frac{\|\nabla v\|_{L^2(\Omega_S)}}{\|v\|_{H^1_0}}>0.
\]
For any $\eps>0$, take $y_j=\eps^{-2}$ for $j\in S$ and $y_j=1$ for $j\in S^c$. Then, for $v=P^y_{V_n}u(y)$,
\[
\frac\alpha{\eps}\|v\|_{H^1_0}\leq \frac{1}{\eps}\|\nabla v\|_{L^2(\Omega_S)}\leq \|v\|_y \leq  \|u(y)\|_y \leq C_f\leq \frac{C_f}{c_f}\|u(y)\|_{H^1_0},
\]
where we have used the framings \iref{frameh10} and \iref{framey}. Therefore,
taking $\eps=\frac{c_f}{C_f}\alpha(1-C)$ implies $\|v\|_{H^1_0}\leq (1-C)\|u(y)\|_{H^1_0}$, and \iref{nonrel} follows.
\end{proof}

However, in the construction of $V_n$ in \S 3, each space $V_{\ell,k}$ is a subset of $V_S$ for $S=\{j\;:\;\ell_j=L_k\}$. This prevents the phenomenon described
in the previous proposition from occurring. Instead, we obtain similar 
convergence bounds as those obtained for $P_{V_n}$, as expressed
in the following result.

\begin{theorem}
\label{widthy}
Assume that the partition of $\Omega$ has the geometry of disjoint inclusions.
On the rectangles $R_\ell$ for $\ell\in\{0,\dots,L\}^d$, the following
uniform convergence estimates hold:
\be
\|u(y)-P^y_{V_{\ell,k}}u(y)\|_{H^1_0}\leq \frac{C_f}{\sqrt 3} 3^{-k}, \quad y\in R_{\ell},
\label{estimellky}
\ee
if $\|\ell\|_\infty<L$, and 
\be
\|u(y)-P^y_{V_{\ell,k}}u(y)\|_{H^1_0}\leq  \frac{C_f}{\sqrt 3} 3^{-k}+C_0C_f2^{-L}, \quad y\in R_{\ell},
\label{estimellkyL}
\ee
if $\|\ell\|_\infty=L$. As a consequence, with $L=L_k$ 
and $V_n$ defined as in \S 3.3, one has the estimates
\be
\|u(y)-P_{V_n}^yu(y)\|_{H^1_0}
\leq C\exp\(-cn^{\frac 1 {2d-2}}\),
\label{estimate2dy}
\ee
for all $y\in \wt Y'$ such that $\min y_j=1$, and 
\be
\|u(y)-P^y_{V_{\ell,k}}u(y)\|_{H^1_0}
\leq C\exp\(-cn^{1/(2d-2)}\)\|u(y)\|_{H^1_0}, 
\label{estimate2dyrel}
\ee
for all $y\in\wt Y$, with constants $c$ and $C$ 
that only depend on $d$, $C_f$, and on the geometry of 
the partition through the constant $C_0$.
\end{theorem}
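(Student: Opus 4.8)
The plan is to exploit the single most useful feature of this setting: because $y\in Y'=[1,\infty[^d$ forces $a(y)\geq 1$, one has $\|v\|_{H^1_0}\leq\|v\|_y$ for every $v\in H^1_0(\Omega)$, while the Galerkin projection $P^y_{V}$ onto any space $V$ is by definition the $\langle\cdot,\cdot\rangle_y$-orthogonal projection, hence the \emph{best} approximation in the energy norm $\|\cdot\|_y$ of \iref{ynorm}. Thus any element $w$ of the space we project onto furnishes $\|u(y)-P^y_Vu(y)\|_{H^1_0}\leq\|u(y)-P^y_Vu(y)\|_y\leq\|u(y)-w\|_y$, and the whole theorem reduces to producing good competitors $w$ inside $V_{\ell,k}$ (resp.\ $V_n$) and measuring the residual in the energy norm. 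The polynomial approximants already built in \S3 are exactly such $w$'s, so no control on the contrast is ever needed.

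For the inner rectangles I would take $w=\sum_{|\nu|\leq k}u_{\ell,\nu}y^\nu\in V_{\ell,k}$ and invoke the energy estimate \iref{yest} of Lemma \ref{lemmapolR}, giving $\|u(y)-P^y_{V_{\ell,k}}u(y)\|_{H^1_0}\leq\|u(y)-w\|_y\leq\tfrac{C_f}{\sqrt3}3^{-k}$, which is \iref{estimellky}.

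The infinite rectangles are the crux, and the obstacle is that $V_{\ell,k}$ approximates the limit solution $u_S(y_{S^c})$, not $u(y)$ itself. Here I would split
\[
\|u(y)-P^y_{V_{\ell,k}}u(y)\|_{H^1_0}\leq\|u(y)-u_S(y_{S^c})\|_{H^1_0}+\|u_S(y_{S^c})-P^y_{V_{\ell,k}}u(y)\|_{H^1_0},
\]
keeping the first term in $H^1_0$ \emph{on purpose}: the quantitative limit estimate \iref{estimconvL} of Lemma \ref{lemmaestimlim} (where the disjoint-inclusion geometry enters) bounds it by $C_0C_f2^{-L}$, whereas passing through the energy norm would only yield the inferior rate $2^{-L/2}$. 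For the second term the key structural fact is $V_{\ell,k}\subseteq V_S$ together with $u_S(y_{S^c})=P^y_{V_S}u(y)$ from \iref{projuS}: by the tower property of orthogonal projections onto nested subspaces for the common inner product $\langle\cdot,\cdot\rangle_y$, this gives $P^y_{V_{\ell,k}}u(y)=P^y_{V_{\ell,k}}u_S(y_{S^c})$. The second term is therefore the energy-best approximation error of $u_S$ in $V_{\ell,k}$; bounding it by the admissible competitor $\sum_{|\nu|\leq k}u_{\ell,\nu}y_{S^c}^\nu\in V_{\ell,k}$ and using that on $V_S$ the norms $\|\cdot\|_y$ and $\|\cdot\|_{y_{S^c}}$ coincide and dominate $\|\cdot\|_{H^1_0}$, estimate \iref{yestS} of Lemma \ref{lemmapolRgen} yields $\tfrac{C_f}{\sqrt3}3^{-k}$. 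Together these give \iref{estimellkyL}.

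To pass from the local spaces to the global $V_n$ and to relative error, I would argue as in \S3.3 and Theorem \ref{theorelprojerror}. For $y$ with $\min y_j=1$, the rectangle $R_\ell\ni y$ has $\ell\in E_k$ and $V_{\ell,k}\subseteq V_n$, so energy-optimality over $V_n$ and norm domination give $\|u(y)-P^y_{V_n}u(y)\|_{H^1_0}\leq\|u(y)-P^y_{V_{\ell,k}}u(y)\|_y$; bounding this energy residual by the local analysis (now contributing $2^{-L/2}$ on infinite rectangles) and choosing $L=L_k$ with the dimension count \iref{estimn} makes both contributions $\exp(-cn^{1/(2d-2)})$, the $2^{-L/2}$ affecting only the constant $c$, which proves \iref{estimate2dy}. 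The relative-error statement \iref{estimate2dyrel} then follows for all $y\in\wt Y$ from the homogeneity \iref{homog} and the lower framing in \iref{frameh10}, verbatim as in Theorem \ref{theorelprojerror}. I expect the nested-projection identity and the deliberate choice to measure the limit-solution error in $H^1_0$ rather than the energy norm to be the delicate points.
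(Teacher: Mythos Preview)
Your proof is correct and follows essentially the same route as the paper: the inner-rectangle case via \iref{yest}, the infinite-rectangle case via the splitting $u(y)-u_S(y_{S^c})$ plus the tower identity $P^y_{V_{\ell,k}}u(y)=P^y_{V_{\ell,k}}u_S(y_{S^c})$ (using $V_{\ell,k}\subset V_S$ and \iref{projuS}) together with \iref{yestS}, and the passage to $V_n$ and to relative error exactly as in \S\ref{subsection 3 3} and Theorem~\ref{theorelprojerror}. Your remark that the step from the local $V_{\ell,k}$ to the global $V_n$ must go through the energy norm and therefore degrades the limit-solution contribution from $2^{-L}$ to $2^{-L/2}$ is a genuine and correct observation that the paper elides with ``exactly the same''; since $L=L_k$ scales linearly with $k$, this only halves the constant $c$ in \iref{estimate2dy}, so the stated conclusion is unaffected.
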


\begin{proof}
For bounded rectangles $R_\ell$ with $\|\ell\|_\infty < L$, we know from Lemma \ref{lemmapolR}, and more precisely from \iref{yest}, that
\[
\|u(y)-P_{V_{\ell,k}}^y u(y)\|_y=\min_{v\in V_{\ell,k}} \|u(y)-v\|_{y} \leq \Big\|u(y)-\sum_{|\nu|\leq k} u_\nu y^\nu\Big\|_{y}\leq \frac{C_f}{\sqrt 3}3^{-k}
\]
for any $y\in R_\ell$. Since all the $y_j$ are greater or equal to $1$, one has $\|v\|_{H^1_0}\leq \|v\|_y$ for all $v$
and therefore \iref{estimellky} follows.

For infinite rectangles $R_\ell$ such that $\|\ell\|_\infty=L$, we
again introduce $S=\{j\;:\;\ell_j=L\}$. Then, using \iref{estimconvL}, 

\begin{align*}
\|u(y)-P^y_{V_{\ell,k}}u(y)\|_{H^1_0}
&\leq \|u(y)-u_S(y_{S^c})\|_{H^1_0} + \|u_S(y_{S^c})-P^y_{V_{\ell,k}}u(y)\|_{H^1_0}\\
&\leq C_0 C_f2^{-L}+ \|u_S(y_{S^c})-P^y_{V_{\ell,k}}u(y)\|_{H^1_0}.
\end{align*}
Since $V_{\ell,k}\subset V_S$, we have
\[
P^y_{V_{\ell,k}}u(y)=P^y_{V_{\ell,k}}P^y_{V_S}u(y)=P^y_{V_{\ell,k}}u_S(y_{S^c})
=P^{y_{S^c}}_{V_{\ell,k}}u_S(y_{S^c}),
\]
Similarly to the previous case, we apply \iref{yestS} from Lemma~\ref{lemmapolRgen}:
\[
\|u_S(y_{S^c})-P_{V_{\ell,k}}^y u_S(y_{S^c})\|_{H^1_0} \leq \|u_S(y_{S^c})-P_{V_{\ell,k}}^y u_S(y_{S^c})\|_{y} \leq \frac{C_f}{\sqrt 3} 3^{-k},
\]
and we thus obtain \iref{estimellkyL}. 

After taking $L=L_k$ and defining $V_n$ as the sum of the $V_{\ell,k}$ for $\ell\in E_k$,
the derivation of \iref{estimate2dy} and \iref{estimate2dyrel} is exactly the same as for \iref{estimate2d} and \iref{estimate2drel}.
\end{proof}

\begin{remark}
As in Remark \ref{remark reduced basis space}, it is expected that the same rate of convergence is attained if $V_n$ is a reduced basis space generated by solutions $u(y^i)$, $i=1,\dots,n$, as long as there are $O\big({k+d-1 \choose d-1}\big)$ samples $y^i$ in each rectangle, 
however with samples forced to be of the form $u_S(y_{S^c}^i)\in V_S$ in the case of infinite rectangles.
\end{remark}

\subsection{State and parameter estimation}

The state estimation problem consists in retrieving the
solution $\o u=u(\o y)$ when the parameter $\o y$ is unknown,
and one observes $m$ linear measurements
\[
w_i=\ell_i(\o u), \quad i=1,\dots,m,
\]
where the $\ell_i$ are continuous linear functional on the Hilbert space
$V$ that contains the solution manifold. These linear functionals
may thus be written in terms of Riesz representers
\[
\ell_i(v)=\<\omega_i,v\>_V.
\]
The Parametrized Background Data Weak (PBDW) method,
introduced in \cite{MPPY} and further studied in \cite{BCDDPW17},
exploits the fact that all potential solutions are well approximated
by reduced model spaces $V_n$. It is based on
a simple recovery algorithm that consists in solving the problem
\be
\min_{u\in V_w}\min_{v\in V_n} \|u-v\|_V,
\label{doublemin}
\ee
where, for  $w=(w_1,\dots,w_m)\in \R^m$,
\[
V_w:=\{u\in V \; : \; \ell_i(u)=w_i, \;i=1,\dots,m\},
\]
is the affine space of functions that agree with the measurements. 

The analysis of this problem is governed by the quantity
\be
\mu_n=\mu(V_n,W):=\sup_{v\in V_n} \frac{\|v\|_V}{\|P_Ww\|_V},
\ee
where $W:={\rm span}\{\omega_1,\dots,\omega_m\}$, which is finite
if and only if $V_n\cap W^\perp=\{0\}$. Then, 
there exists a unique minimizing pair 
\[
(u^*,v^*)=(u^*(w),v^*(w))\in V_w\times V_n
\]
to \iref{doublemin}, which satisfies the estimates
\be
\|\o u-v^*\|_V \leq \mu_n\min_{v\in V_n}\|u-v\|_V,
\label{vstar}
\ee
and 
\be
\|\o u-u^*\|_V \leq \mu_n \min_{v\in V_n+(W\cap V_n^\perp)}\|u-v\|_V.
\label{ustar}
\ee
The computation of $(u^*,v^*)$ amounts to solving finite linear systems, and both solutions depend linearly on $w$.

Turning to our specific elliptic problem, and assuming that
the $\ell_i$ belong to $H^{-1}(\Omega)=V'$ for $V=H^1_0(\Omega)$, we may apply the
above PBDW method using the reduced basis spaces $V_n$
introduced in \S 3. As an immediate consequence of Theorem \iref{width}, we obtain a recovery estimate in relative error.

\begin{proposition}
Let $\o y\in\wt Y$ and $\o u=u(\o y)$. Then \ac{ 
both 
estimators $v^*\in V_n$ and $u^*\in V_w$ satisfy}
\be
\ac{\max\{\|\o u- v^*\|_{H^1_0},\|\o u-u^*\|_{H^1_0}\} \leq 
C\mu_n\exp\(-cn^{\frac 1 {2d-2}}\)\|\o u\|_{H^1_0}.}
\label{estimate2drelstate}
\ee
The positive constants $c$ and $C$ 
only depend on $d$, $C_f$, and on the geometry of 
the partition through the constant $C_0$.
\end{proposition}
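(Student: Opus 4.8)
The plan is to combine the abstract PBDW error bounds \iref{vstar} and \iref{ustar} with the relative approximation estimate already established in Theorem \ref{width}. The key observation is that the best-approximation quantity $\min_{v\in V_n}\|\o u-v\|_{H^1_0}$ appearing in \iref{vstar} is precisely $\|\o u-P_{V_n}\o u\|_{H^1_0}$, which by the relative bound \iref{estimate2drel} is controlled by $C\exp(-cn^{\frac 1{2d-2}})\|\o u\|_{H^1_0}$ for any $\o y\in\wt Y$. Thus the whole proposition should reduce to inserting the rate of \S 3 into the generic recovery inequalities of the PBDW framework.

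First I would treat the estimator $v^*$. Applying \iref{vstar} with $V=H^1_0(\Omega)$ gives
\[
\|\o u-v^*\|_{H^1_0}\leq \mu_n\min_{v\in V_n}\|\o u-v\|_{H^1_0}=\mu_n\|\o u-P_{V_n}\o u\|_{H^1_0},
\]
and substituting \iref{estimate2drel} immediately yields the desired inequality for $\|\o u-v^*\|_{H^1_0}$.

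Next I would handle $u^*$ via \iref{ustar}. Here the minimization runs over the larger space $V_n+(W\cap V_n^\perp)$, which contains $V_n$; hence the minimum can only decrease,
\[
\min_{v\in V_n+(W\cap V_n^\perp)}\|\o u-v\|_{H^1_0}\leq \min_{v\in V_n}\|\o u-v\|_{H^1_0}=\|\o u-P_{V_n}\o u\|_{H^1_0}.
\]
Applying \iref{estimate2drel} once more gives the same bound for $\|\o u-u^*\|_{H^1_0}$, so taking the maximum of the two estimates produces \iref{estimate2drelstate}. Since the constants $c,C$ in \iref{estimate2drel} depend only on $d$, $C_f$, and the geometry of the partition through $C_0$, the same dependence is inherited here, the factor $\mu_n$ being carried along unchanged.

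I do not anticipate a genuine obstacle: the proof is a direct concatenation of the generic PBDW inequalities with the relative approximation rate of Theorem \ref{width}. The only point requiring a small amount of care is verifying that the inclusion $V_n\subset V_n+(W\cap V_n^\perp)$ makes the $u^*$ estimate no worse than the $v^*$ estimate, so that a single application of \iref{estimate2drel} simultaneously controls both terms; I would also check that the relative estimate \iref{estimate2drel} applies verbatim on all of $\wt Y$, which it does since $\o y\in\wt Y$ is exactly its range of validity.
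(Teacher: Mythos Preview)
Your proof is correct and follows essentially the same approach as the paper, which simply states that the result follows by combining the relative approximation estimate \iref{estimate2drel} with the PBDW recovery bounds \iref{vstar} and \iref{ustar}. Your explicit handling of the inclusion $V_n\subset V_n+(W\cap V_n^\perp)$ for the $u^*$ bound is a helpful detail that the paper leaves implicit.
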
 

\begin{proof}
It follows readily by combining \iref{estimate2drel}
applied to $y=\o y$ with the recovery estimates
\iref{vstar} and \iref{ustar}.
\end{proof}

We next turn to the problem of parameter estimation, namely 
recovering an approximation $y^*$ to $\o y$ from the measurements
$w$. In contrast to state estimation,
this is a nonlinear inverse problem since the first mapping in
\[
\o y\mapsto \o u \mapsto w
\]
is typically nonlinear. One way of relaxing this problem into a linear one is by first using a recovery $u^*$ of the state $\o u$, for example
obtained by the PBDW method. One then defines $y^*$ as the
minimizer over $\wt Y$ of the residual
\[
R(y):=\|{\rm div}(a(y)\nabla u^*)+f\|_{H^{-1}}.
\]
This is a quadratic problem when $a(y)$ has an affine dependence
in $y$, that can be solved by standard quadratic optimization methods. The rationale for this approach is the fact that
\[
R(y)=\|A_y u^*-A_y u(y)\|_{H^{-1}}\sim \|u^*-u(y)\|_{H^1_0},
\]
and therefore we should be close to finding the parameter $y$ that best explains
the approximation $u^*$. Unfortunately, this approach is not
much viable in the high-contrast regime since the 
equivalence $\|A_y v\|_{H^{-1}}\sim \|v\|_{H^1_0}$ has constants
that are not uniform in $y$ and deteriorate with the level of contrast.

Instead, we propose a more specific approach that 
exploits the piecewise constant structure of $a(y)$,
assuming that $V_n$ is a reduced space of the form
\[
V_n=\Span(u^1,\dots,u^n), \quad u^i=u(y^i),
\]
for some properly selected parameter vectors
\[
y^i=(y^i_1,\dots,y^i_d),\quad i=1,\dots,n.
\]
As mentioned, see Remark \iref{remark reduced basis space}, these spaces satisfy the same exponential convergence bounds as the spaces constructed in \S 3.

The PBDW estimator $v^*=v^*(w)\in V_n$ thus has the form
\[
v^*=\sum_{i=1}^n c_iu^i\in V_n
\]
and satisfies a similar bound \iref{estimate2drelstate} as in the above
proposition. Then, on the particular domain $\Omega_j$, one has
\[
\frac f{\o y_j}=-\Delta \o u_{|\Omega_j}\approx -\sum_{i=1}^n c_i\Delta u^i= \sum_{i=1}^n c_i\frac{f}{y^i_j},
\]
and therefore, a natural candidate for the parameter estimate
is $y^*=(y^*_1,\dots,y^*_d)$ with
\be
y_j^*:=\left(\sum_{i=1}^n \frac{c_i}{y^i_j}\right)^{-1}.
\label{ystar}
\ee
The following result gives a recovery bound in relative error for
the inverse diffusivity.

\begin{proposition}
With the notation $1/y=(1/y_1,\dots,1/y_d)$, the estimator $y^*$ defined by \iref{ystar} satisfies the bound
\be
\Big\|\frac{1}{y^*}-\frac{1}{\o y}\Big\|_{\infty}
\leq 
\frac {C_f }{c_f}C\mu_n\exp\(-cn^{\frac 1 {2d-2}}\)\Big\|\frac 1 {\o y}\Big\|_{\infty},
\label{yystar}
\ee
where $C_f$ and $c_f$ are as in \iref{frameh10}, and the other constants as in \iref{estimate2drelstate}. 
\end{proposition}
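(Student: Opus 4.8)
The plan is to exploit the piecewise-constant structure directly, working subdomain by subdomain. On each $\Omega_j$ the exact state $\o u=u(\o y)$ satisfies $-\Delta \o u = f/\o y_j$ in $H^{-1}(\Omega_j)$, as read off from the variational formulation \iref{variat} (with the convention $1/\o y_j=0$ when $\o y_j=\infty$, in which case $\o u$ is constant on $\Omega_j$ and $-\Delta \o u=0$ there). Likewise each reduced basis element $u^i=u(y^i)$ solves $-\Delta u^i = f/y^i_j$ on $\Omega_j$, and by linearity the PBDW estimator $v^*=\sum_i c_i u^i$ satisfies $-\Delta v^* = f\sum_i c_i/y^i_j = f/y^*_j$ on $\Omega_j$, precisely by the definition \iref{ystar} of $y^*_j$. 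Subtracting, the state error $w:=\o u-v^*$ obeys the governing identity
\[
-\Delta w = \Big(\frac{1}{\o y_j}-\frac{1}{y^*_j}\Big)\,f \quad\text{in } H^{-1}(\Omega_j),
\]
so that the error in the inverse diffusivity on $\Omega_j$ is encoded in the $H^{-1}(\Omega_j)$-residual of the state estimator on that subdomain.

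Next I would take $H^{-1}(\Omega_j)$ norms on both sides. The right-hand side factors as $\big|\tfrac{1}{\o y_j}-\tfrac{1}{y^*_j}\big|\,\|f\|_{H^{-1}(\Omega_j)}$, and the standing assumption \iref{cf} that $\|f\|_{H^{-1}(\Omega_j)}\geq c_f>0$ for every $j$ bounds this factor from below. On the left, for any $\phi\in H^1_0(\Omega_j)$ one has $\<-\Delta w,\phi\>=\int_{\Omega_j}\nabla w\cdot\nabla\phi\leq \|\nabla w\|_{L^2(\Omega_j)}\|\phi\|_{H^1_0(\Omega_j)}\leq \|w\|_{H^1_0(\Omega)}\|\phi\|_{H^1_0(\Omega_j)}$, whence $\|\Delta w\|_{H^{-1}(\Omega_j)}\leq \|w\|_{H^1_0(\Omega)}$. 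Combining the two bounds and maximizing over $j$ yields the clean intermediate estimate
\[
\Big\|\frac{1}{y^*}-\frac{1}{\o y}\Big\|_\infty \leq \frac{1}{c_f}\,\|\o u-v^*\|_{H^1_0(\Omega)}.
\]

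Finally I would insert the PBDW state-estimation bound \iref{estimate2drelstate} for $\|\o u-v^*\|_{H^1_0}$ and convert the factor $\|\o u\|_{H^1_0}$ into $\|1/\o y\|_\infty$. By the homogeneity property \iref{homog}, writing $\o u=t^{-1}u(t\o y)$ with $t^{-1}=\min_j\o y_j=\|1/\o y\|_\infty^{-1}$ and $t\o y\in\cN$, the upper framing in \iref{frameh10} gives $\|\o u\|_{H^1_0}=t\,\|u(t\o y)\|_{H^1_0}\leq C_f\|1/\o y\|_\infty$ (the case $\o y=(\infty,\dots,\infty)$ being trivial). Substituting these into the intermediate estimate produces exactly the claimed bound \iref{yystar}, with the stated constant $\tfrac{C_f}{c_f}$.

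The computations here are routine duality, so the main delicate point is the bookkeeping at infinite diffusivities: I expect the crux to be verifying that the governing identity remains valid across all of $\wt Y$, including indices $j$ with $\o y_j=\infty$ or $y^i_j=\infty$. In those cases the relevant solution lies in some $V_S$ and is constant on $\Omega_j$, so its distributional Laplacian vanishes in $H^{-1}(\Omega_j)$, which is consistent with the convention $1/\infty=0$ built into \iref{ystar}; one must check this carefully to ensure $-\Delta v^*=f/y^*_j$ holds as stated. Conceptually, the key insight that makes everything uniform in the contrast is the decision to measure the error in the \emph{inverse} diffusivity $1/y$ rather than in $y$: the data $f$ enters the local equation affinely through $f/y_j$, linearizing the parameter-to-residual map and keeping all constants independent of the contrast level.
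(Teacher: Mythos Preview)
Your proof is correct and follows essentially the same route as the paper. The paper tests directly against $\phi\in H^1_0(\Omega_j)$ and manipulates the variational identities, while you phrase the same computation as $-\Delta w=(1/\o y_j-1/y^*_j)f$ in $H^{-1}(\Omega_j)$ followed by taking the dual norm; these are the same duality argument, and both land on the intermediate bound $\|1/y^*-1/\o y\|_\infty\leq c_f^{-1}\|\o u-v^*\|_{H^1_0}$ before invoking \iref{estimate2drelstate} and $\|\o u\|_{H^1_0}\leq C_f\|1/\o y\|_\infty$. (One small typo: in your homogeneity step you wrote $\o u=t^{-1}u(t\o y)$ but then correctly used $\|\o u\|_{H^1_0}=t\,\|u(t\o y)\|_{H^1_0}$; the latter is the right relation from \iref{homog}.)
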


\begin{proof}
For $1\leq j \leq d$, take $\phi\in H^1_0(\Omega_j)$, then

\begin{align*}
\left|\frac{1}{ y_j^*}-\frac{1}{\o y_j}\right| |\<f,\phi\>_{H^{-1},H^1_0}|
&=\left|\sum_{i=1}^n\frac{c_i}{y^i_j}\int_{\Omega_j}y^i_j\nabla u^i\cdot\nabla \phi\, dx-\frac{1}{\o y_j}\int_{\Omega_j}\o y_j\nabla \o u\cdot\nabla \phi\, dx\right|\\
&=\left|\int_{\Omega_j}\nabla (v^*-\o u)\cdot\nabla \phi\, dx\right|\\
&\leq \|v^*-\o u\|_{H^1_0(\Omega)}\|\phi\|_{H^1_0(\Omega_j)}.
\end{align*}
Optimizing over $\phi$ gives
\[
\Big\|\frac{1}{y^*}-\frac{1}{\o y}\Big\|_{\infty}
\leq 
c_f^{-1}\|v^*-\o u\|_{H^1_0},
\]
which combined with \iref{estimate2drelstate} gives
\[
\Big\|\frac{1}{y^*}-\frac{1}{\o y}\Big\|_{\infty}
\leq 
c_f^{-1}C\mu_n\exp\(-cn^{\frac 1 {2d-2}}\)\|\o u\|_{H^1_0}.
\]
Using the Lax-Milgram estimate 
\[
\|\o u\|_{H^1_0}\leq C_f \Big\|\frac 1 {\o y}\Big\|_{\infty},
\]
we reach \iref{yystar}. 
\end{proof}

\begin{remark}
The bound \eqref{yystar} is not entirely satisfactory since the approximation error on $\o y_j$ remains high when $\o y\in \cN$ with $\o y_j\gg 1$. We do not know if a bound of the form
\[
\left|\frac{1}{y_j^*}-\frac{1}{\o y_j}\right| \leq \frac{\eps_n}{\o y_j},\quad 1\leq j \leq d, 
\]
which would imply $|y_j^*-\o y_j|\leq {\eps_n}/(1-\eps_n)\,\o y_j$, holds uniformly over $\cN$ with $\eps_n\underset{n\to+\infty}{\longrightarrow}0$.
\end{remark}

\section{Numerical illustration}

\ac{The base model that will be used all along the numerical illustrations is the diffusion equation \iref{ellip} with data $f=1$ 
set on the two-dimensional square $\Omega=[-1, 1]^2$
with homogeneous Dirichlet boundary conditions. We consider a piece-wise constant diffusion coefficient
\[
a_{|\Omega_j} = y_j, \quad 1\leq j \leq d,
\]
on a partition of $\Omega$ into $16$ squares of quarter side-length.}

\ac{As such this partition does not satisfy the geometrical assumption of ``Lipschitz partition'' that was critical
in our analysis for the application of Lemma \ref{lemmaestimlim}. 
Therefore we consider sub-partitions that comply to the assumptions, such as
illustrated on Figure \ref{fig:GeomAssumption}, which amounts to equate the parameters
$y_j$ of squares belonging to the same sub-domain. This way we can consider that $y = (y_A, y_B, y_C, y_D)$ consists of four parameters, one per each subdomain.}

\begin{figure}
\begin{center}

\begin{minipage}[c]{0.4\textwidth}
\begin{tikzpicture}[scale = .90]
  \draw(0,0)--(0,4);
  \draw(4,0)--(4,4);
  \draw(0,4)--(4,4);
  \draw(0,0)--(4,0);
  \node at (1.5, 1.5) {$\Omega_D$};
  
  \draw (1, 3) rectangle (4, 2);
  \node at (2.5, 2.5) {$\Omega_C$};
  
  \draw (3, 1) rectangle (4, 2);
  \node at (3.5, 1.5) {$\Omega_B$};
  
  \draw (1, 1) rectangle (2, 0);
  \node at (1.5,0.5) {$\Omega_A$};
\end{tikzpicture}
\caption{Lipschitz partition\\ of $\Omega$.}
\label{fig:GeomAssumption}
\end{minipage}
\hspace{1cm}
\begin{minipage}[c]{0.4\textwidth}
\begin{tikzpicture}[scale = .90]
  \draw[step=1cm, black, thin] (0,0) grid (4,4);
  \node at (0.5,0.5) {$\Omega_A$};
  \node at (1.5,1.5) {$\Omega_A$};
  \node at (2.5,2.5) {$\Omega_A$};
  \node at (3.5,3.5) {$\Omega_A$};
  
  \node at (0.5,2.5) {$\Omega_B$};
  \node at (1.5,3.5) {$\Omega_B$};
  \node at (2.5,0.5) {$\Omega_B$};
  \node at (3.5,1.5) {$\Omega_B$};
  
  \node at (1.5,0.5) {$\Omega_C$};
  \node at (0.5,1.5) {$\Omega_C$};
  \node at (3.5,2.5) {$\Omega_C$};
  \node at (2.5,3.5) {$\Omega_C$};
  
  \node at (0.5,3.5) {$\Omega_D$};
  \node at (1.5,2.5) {$\Omega_D$};
  \node at (2.5,1.5) {$\Omega_D$};
  \node at (3.5,0.5) {$\Omega_D$};
\end{tikzpicture}
\caption{Non-lipschitz\\  partition of $\Omega$.}
\label{fig:NoGeomAssumption}
\end{minipage}

\end{center}
\end{figure}

\ac{The numerical results that we next
present aim to illustrate the robustness to high-contrast of the reduced basis method, and
discuss in addition the effect of parameter selection, higher parametric dimensions, 
and inclusions that are not satisfying the geometric assumption as exemplified on Figure
\ref{fig:NoGeomAssumption}.}

\ac{We construct different reduced bases
$\{u^1,\dots,u^n\}$ of moderate dimension $\displaystyle{1\leq n\leq 15}$, where
\[
u^k=u(y^k),
\]
for certain parameter selections $y^1,\dots,y^n$. Each reduced basis
element $u^k$ is numerically computed by the Galerkin method 
in a background finite element space $V_h$ of dimension $6241$.}

\ac{The reduced basis spaces are thus subspaces of $V_h$, thus strictly speaking
spaces $V_{n,h}$ depending on $n$ and on the meshsize $h$. In our numerical computation, we 
always assess the error 
\[
P^y_{V_h}u(y)-P^y_{V_{n,h}}u(y).
\]
We noticed that for the considered values of $n=1,\dots,15$ the error curves do not vary much
when further reducing the mesh size $h$. In fact they are already essentially the same
when the dimension of $V_h$ is four times smaller. Therefore, for simplicity of the presentation, we still write
\[
u(y)-P^y_{V_{n}}u(y),
\]
bearing in mind that the additional finite element error $u(y)-P^y_{V_h}u(y)$ depends on $h$
(with algebraic decay in the finite element dimension).}

\ac{All the tests were done using Python 3.8. For more information and experiments not presented here we invite the reader to look into the github repository \url{https://github.com/agussomacal/ROMHighContrast}.}

\subsection{Parameter selection}

We first study the case of a one parameter family : the diffusion coefficient $y_A$ of $\Omega_A$ in Figure \ref{fig:GeomAssumption} 
varies from $1$ to $\infty$, while the other subdomains are considered as background with
all coefficents equal to $1$. Thus the $y^k$ are of the form $y^k=(y_A^k,1,1,1)$.

\begin{figure}[ht]
\centering
\hspace{-2cm}
\begin{minipage}{.45\textwidth}
\includegraphics[scale=0.35]{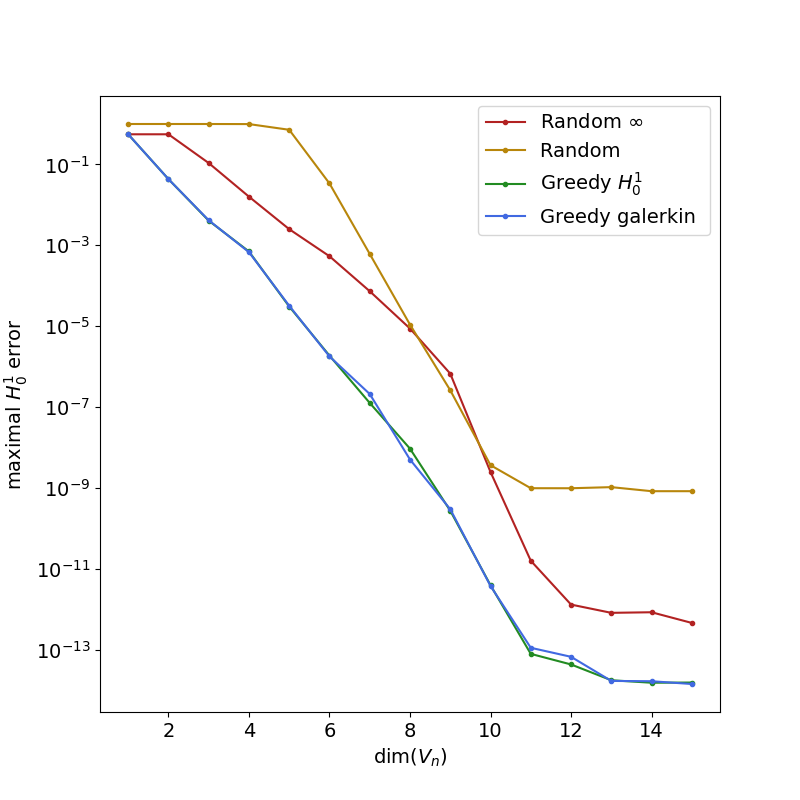}
\end{minipage}
\hspace{1cm}
\begin{minipage}{.45\textwidth}
\includegraphics[scale=0.35]{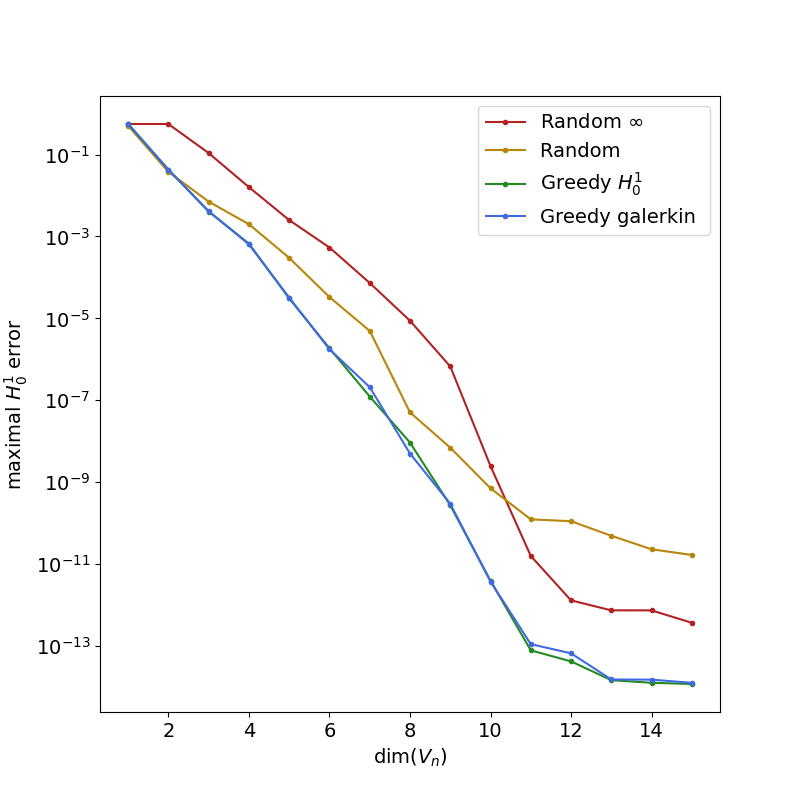}
\end{minipage}
\caption{Galerkin (left) and $H^1_0$ (right) projection error, both measured in $H^1_0$ relative error, maximized
over the parameter domain, for different reduced bases, case $d=1$.}
\label{fm_rates}
\end{figure}

\ac{In reduced basis constructions, two approaches for parameter selection are usually 
considered : random or greedy. Random selection usually performs well enough 
in many situations, however we shall see that it fails in the high contrast regime. This is 
in particular due to the fact that it does not capture the limit solutions,
while we have observed in \S 4 that robust convergence of the Galerkin method
in the high-contrast regime critically requires to include limit solutions
in the space $V_n$. Here, there is only one limit solution $u_\infty=u(y_\infty)$ where $y_\infty=(\infty,1,1,1)$,
and this element is picked by the greedy method if initialized at any other point. }

\ac{More precisely, we compare four strategies for selecting the $y_A^k\in [1,\infty]$:
\begin{itemize}
    \item Random: the $y_A^k$ are drawn independently according to the
uniform law for $\frac 1 {y_A} \in [0,1]$.
    \item Random-$\infty$: First the limit solution corresponding to $y_A=\infty$ is put
    in the basis. The rest of the elements are randomly picked as in the previous case.
    \item Greedy $H^1_0$: The $y^k$ are picked incrementally, $y^{k+1}$ maximizing the relative $H^1_0$ projection error $\|u(y)-P_{V_k} u(y)\|_{H^1_0}/\|u(y)\|_{H^1_0}$.
    \item Greedy Galerkin: The $y^k$ are picked incrementally, $y^{k+1}$ maximizing the relative $H^1_0$ error of the Galerkin projection 
    $\|u(y)-P_{V_k}^y u(y)\|_{H^1_0}/\|u(y)\|_{H^1_0}$.
\end{itemize}
}

\ac{Figure \ref{fm_rates} displays on the left the evolution of the maximal relative error of the Galerkin projection
\[
\sup_{y_A\in[1,\infty]} \frac{\|u(y)-P_{V_n}^yu(y)\|_{H^1_0}}{\|u(y)\|_{H^1_0}},
\]
as a function of $n=\dim(V_n)$ for these various selection strategies. It reveals the superiority of the greedy selection
that reaches machine precision after picking $n=11$ reduced basis elements, and the gain in including the limit solution 
in the case of a random selection.
As a comparison, we display on the right the decay of the relative $H^1_0$-orthogonal projection error 
\[
\sup_{y_A\in[1,\infty]} \frac{\|u(y)-P_{V_n} u(y)\|_{H^1_0}}{\|u(y)\|_{H^1_0}}
\]
for the same parameter selection strategies. Here, we notice that the inclusion of the limit solution $u_\infty$ is not anymore critical for reaching good accuracy. Nevertheless, these errors still decay faster for the greedy strategies.}

\begin{remark}
\ac{As the diffusion coefficient is piecewise constant on the partition $\Omega_A\cup \Omega_A^c$, the parameter space dimension is $d=2$ in this numerical example. The theoretical results thus provide a bound on the error of order $\exp(-c\sqrt n)$. However, this bound is obtained with local reduced spaces $V_{\ell,k}$ on dyadic intervals, which does not perform as well as $V_n=\bigoplus_{\ell\in E_k} V_{\ell,k}$, for which one might expect a rate closer to $\exp(-c n)$. In Figure \ref{fm_rates} for $n\leq 11$, that is, until numerical precision issues arise, we even observe a faster than exponential convergence, that could be due to the superiority of reduced bases over polynomial approximations.}
\end{remark}


%

\begin{remark} It is well known that the reduced basis can be very ill-conditioned, since $u^n$ becomes extremely close to $V_{n-1}={\rm span}\{u^1,\dots,u^{n-1}\}$ as
$n$ gets moderately large. In order to avoid numerical instabilities,
prior to the computation
of the Galerkin or $H^1_0$ projection onto $V_n$, we need
to perform a change of basis, typically by some orthonormalization process.
In our numerical test, we perform this orthonormalization with respect
to the discrete $\ell^2$ inner product for the nodal values in the 
background finite element representation, using the QR decomposition, and obtain a satisfactory stable numerical behavior. However, this process is not invariant under permutations, and we observe that it
behaves better in terms of numerical stability when sorting the reduced basis elements from higher contrast to lower contrast.
\end{remark}

\begin{figure}[ht]
\centering
\includegraphics[scale=0.35]{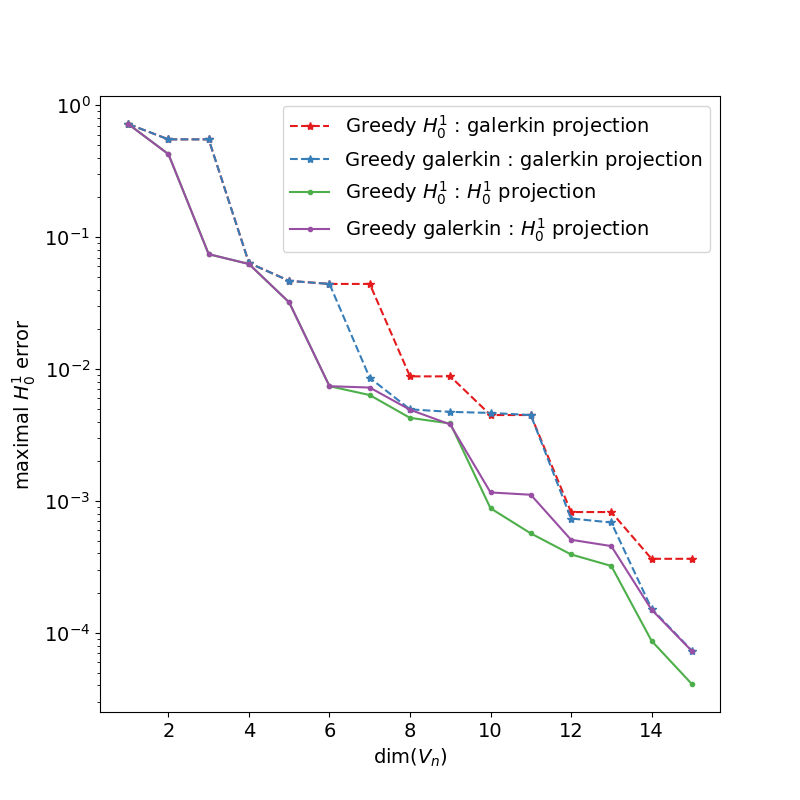}
\caption{Galerkin and $H^1_0$ projection error (both measured in $H^1_0$ relative error maximized
over the parameter domain) for different reduced bases, case $d=2$.}
\label{fm_rates_d2}
\end{figure}

\ac{In this one parameter scenario, both greedy strategies behaved equally well. However, as we increase the dimensionality of the problem $d>1$, Greedy Galerkin appears to be the best selection procedure, as could be expected since it optimizes the error based on the approximation which is effectively computed in forward modeling. Figure \ref{fm_rates_d2} shows this effect when $d=2$, where $y_A$ and $y_B$ are allowed to vary independently while $y_C$ and $y_D$ are taken as background always equal to $1$.}

\begin{figure}[ht]
\centering
\hspace{-2cm}
\begin{minipage}{.45\textwidth}
\includegraphics[scale=0.35]{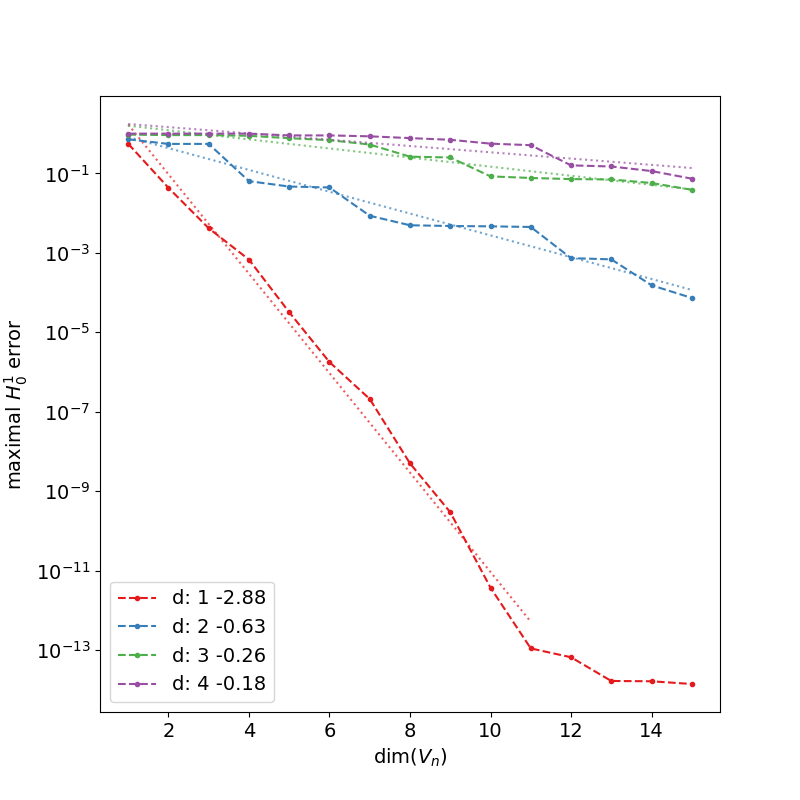}
\end{minipage}
\hspace{1cm}
\begin{minipage}{.45\textwidth}
\includegraphics[scale=0.35]{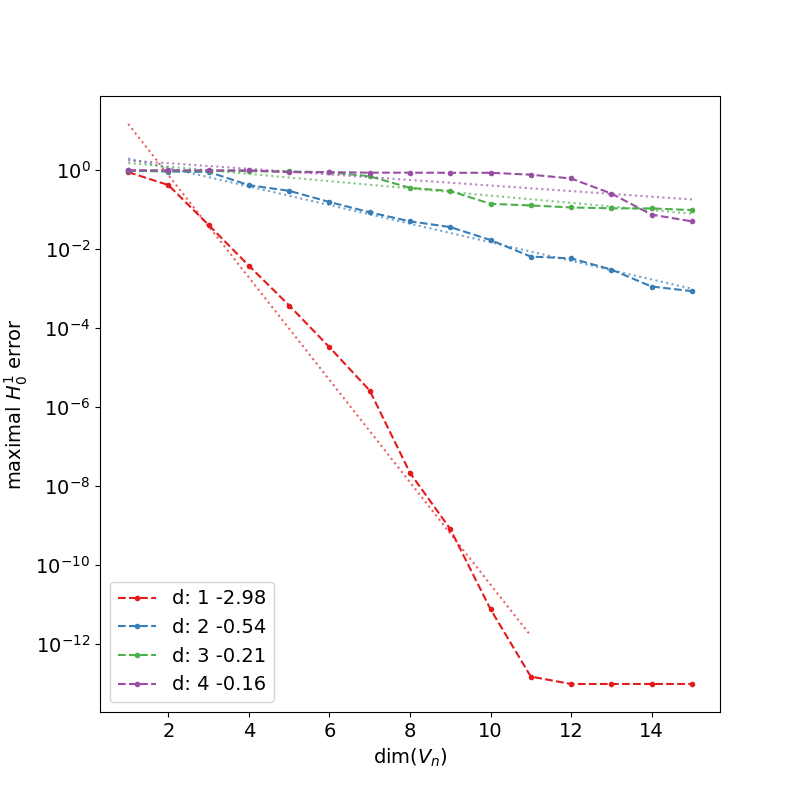}
\end{minipage}
\caption{The Galerkin projection of Greedy Galerkin method for increasing dimensionality in geometries satisfying (left) or not (right) the assumptions.}
\label{fig:dim_effect}
\end{figure}

\subsection{Influence of dimensionality and geometry}

\ac{In order to study the impact of dimensionality on the approximation rates,
we compare the behavior of the Greedy Galerkin selection method,
as we increase the number of freely varying parameters. As before, we will have 
for $y=(y_A,1,1,1)$ when $d=1$, then $y=(y_A,y_B,1,1)$ when $d=2$, 
until having all four subdomains freely varying between $1$ and $+\infty$. }

\ac{In Figure \ref{fig:dim_effect} the degradation with respect to dimension 
is clearly observed as the approximation capabilities strongly decrease. 
Even thought the  exponential decay rate is still conserved, the decay 
parameter shrinks from almost $3$ down to $0.22$ when $d=4$.  }


\ac{Secondly, we study the case where the geometrical assumptions are not satisfied. We follow the same incremental subdomains unfreezing as in the previous case but using the geometry stated in Figure \ref{fig:NoGeomAssumption}. We observe that the reduced basis approach still 
achieves exponential approximation rates, actually higher than in the previous example. This hints that the geometric assumptions
which are needed in our proofs could be artificial, and leaves open the question of achieving such results without relying on these assumptions.}


\end{document}